\newtheorem{lemma}{Lemma}
\newtheorem{thm}{Theorem}
\newenvironment{smallermatrix}[1][c]
{\null\,\vcenter\bgroup
  \Let@\restore@math@cr\default@tag
  \baselineskip0pt \lineskip0.4pt \lineskiplimit0pt
  \ialign\bgroup\if#1l\else\hfil\fi$\m@th\scriptstyle##$\if#1r\else\hfil\fi&&\thickspace\hfil
  $\m@th\scriptstyle##$\hfil\crcr
}{%
  \crcr\egroup\egroup\,%
}
\NewDocumentCommand{\ts}{O{c} e{^?_}}{
  \begin{smallermatrix}[#1]
  \mathstrut\IfValueT{#2}{#2} \\
  \mathstrut\IfValueT{#3}{#3} \\
  \mathstrut\IfValueT{#4}{#4}
  \end{smallermatrix}%
}
\newcommand{\revise}[1]{{\color{black}{#1}}}
\journal{Arxiv}
\begin{document}

\begin{frontmatter}



\title{Efficient and accurate nonlinear model reduction via first-order empirical interpolation}




\author[inst2]{Ngoc Cuong Nguyen}
\author[inst2]{Jaime Peraire}

\affiliation[inst2]{organization={Center for Computational Engineering, Department of Aeronautics and Astronautics, Massachusetts Institute of Technology},
            addressline={77 Massachusetts
Avenue}, 
            city={Cambridge},
            state={MA},
            postcode={02139}, 
            country={USA}}
            

\begin{abstract}
We present a model reduction approach that extends the original empirical interpolation method to enable  accurate and efficient reduced basis approximation of parametrized nonlinear partial differential equations (PDEs). In the presence of nonlinearity, the Galerkin reduced basis approximation remains computationally expensive due to the high complexity of evaluating the nonlinear terms, which depends on the dimension of the truth approximation. The  empirical interpolation method (EIM) was proposed as a nonlinear model reduction technique to render the complexity of evaluating the nonlinear terms independent of the dimension of the truth approximation. \revise{We introduce a first-order empirical interpolation method (FOEIM) that makes use of the partial derivative information to construct an inexpensive and stable interpolation of the nonlinear terms. We propose two different FOEIM algorithms to generate interpolation points and basis functions.  We apply the FOEIM  to nonlinear elliptic PDEs and compare it to the Galerkin reduced basis approximation and the EIM. Numerical results are presented to demonstrate the performance of the three reduced basis approaches.} 
\end{abstract}






\begin{keyword}
empirical interpolation method \sep reduced basis method \sep model reduction \sep finite element method \sep elliptic equations \sep reduced order model
\end{keyword}

\end{frontmatter}


\section{Introduction}
\label{sec:intro}

Many physical systems in engineering and science are described by partial differential equations (PDEs). The design, optimization, control, and characterization of physical systems often require repeated, accurate and fast prediction of quantities of interest (QoIs). The evaluation of QoIs  demands numerical approximation of the underlying PDE by finite element (FE), finite difference (FD) and finite volume (FV) methods. The numerical approximation of PDEs yields  high-dimensional  systems of equations --- known as full order models (FOMs) --- which can be computationally expensive for complex physical processes. Model reduction methods seek to reduce the computational complexity of FOMs by constructing reduced order models (ROMs) in significantly lower dimensional spaces. Projection-based model reduction techniques have been widely used to construct ROMs in numerous applications such as fluid mechanics \cite{rowley04:_compressible_pod,LeGresley2000,Knezevic2011,Lorenzi2016,Yano2019,Yano2020,Blonigan2021,Yu2022,Ballarin2016,Carlberg2013,Du2022}, solid mechanics \cite{ARCME,Huynh2007a,Farhat2014,Farhat2015,Tiso2013}, electromagnetic \cite{Chen2010a,Vidal-Codina2018a,Pomplun2010}, optimization \cite{Manzoni2012,Qian2017}, inverse problems \cite{Hoang2013,Nguyen_SantaFE08,Galbally2010,Lieberman2010},  multiscale problems \cite{Nguyen2008},  optimal control \cite{Bader2016,Karcher2018}, and data assimilation \cite{Karcher2018a,Maday2013,Maday2015b}. \revise{The success of projecton-based ROMs without hyper-reduction is limited to FOMs with affine parameter dependence \cite{ARCME,prudhomme02:_reliab_real_time_solut_param} and low-order polynomial nonlinearities \cite{Knezevic2011,Nguyen2009b}. In these cases, very efficient ROMs can be developed by resolving the affine and non-linear terms into the sum of products of the basis functions and coefficients.} 

In the presence of strong nonlinearities,  projection-based ROMs become computationally expensive without an efficient treatment of the nonlinear terms \cite{Grepl2007a,Nguyen2008d}. A number of different approaches have been developed to deal with nonlinear PDEs in the context of model reduction.  One approach is linearization \cite{weile01:_reduc_krylov} and polynomial approximation \cite{phillips03:_weakly_nonlinearMOR}. However, inefficient representation of the nonlinear terms and fast exponential growth (with the degree of the nonlinear approximation order) of the computational complexity render these methods quite expensive, in particular for strong nonlinearities.\footnote{\revise{This refers to the presence of high-order polynomial or non-polynomial nonlinearities in the parametrized PDEs. Typically, $q$-degree polynomial terms result in $O(N^{q+1})$ operation count to assemble the reduced model due to the expansion of the nonlinear terms into the sum of products of the basis functions and coefficients. Here $N$ is the dimension of the reduced model. Non-polynomial terms may not be admitted to such expansion and may lead to a higher computational cost that depends on the dimension of the full model.}} Another approach uses piecewise polynomials to approximate nonlinear terms \cite{rewienski03:_trajectory_approach}. However, there are still many nonlinear functions that may not be approximated well by using low degree piecewise polynomials unless there are very many constituent polynomials.

 
An efficient model reduction technique for PDEs with nonaffine parameter dependence was first proposed in \cite{Barrault2004a} that led to the development of the empirical interpolation method for constructing coefficient-function approximation of nonaffine terms. Shortly later, the empirical interpolation method was extended to develop efficient ROMs for nonlinear PDEs \cite{Grepl2007a}. Since the pioneer work \cite{Barrault2004a}, the EIM has been widely used to construct efficient ROMs of nonaffine and nonlinear PDEs for  different applications \cite{Grepl2007a,Nguyen2007,Drohmann2012,Manzoni2012,Hesthaven2014,Hesthaven2022,Chen2021}.  In \cite{Nguyen2008a}, the best-points interpolation method (BPIM) was developed to treat nonlinearity in FOMs and generate efficient ROMs for elliptic problems and convection-diffusion problems. In \cite{Galbally2010}, gappy POD, EIM, and BPIM were applied to a nonlinear combustion problem governed by an advection-diffusion-reaction PDE to enable the rapid solution of large-scale statistical inverse problems. In \cite{Kramer2019}, a lifting transformation method was proposed to reduce the complexity of evaluating nonlinear reduced order models. A new model reduction method for parametrized nonlinear PDEs is recently introduced in \cite{Yano2019a} to provide rapid evaluation of the nonlinear reduced order model via an empirical quadrature procedure. This method differs from the interpolation-then-integration approaches described earlier because it employs the sparse quadrature rules to directly approximate the nonlinear integrals.

The discrete empirical interpolation method (DEIM) \cite{Chaturantabut2010} is a discrete variant of the empirical interpolation method. DEIM consider a collection of vectors arising from the spatial discretization of parameter-dependent functions or PDEs and select a subset of vectors and associated interpolation indices. DEIM has also been widely used to construct efficient ROMs of  nonlinear PDEs \cite{Maierhofer2022,Tiso2013,Yu2022,Kramer2019}. DEIM is closely related to missing point estimation  \cite{Astrid2008} in the sense that both methods employ a small selected set of spatial grid points to avoid evaluation of the expensive inner products at every time step that are required to evaluate the nonlinearities. \revise{However, ROMs via (D)EIM have been shown to suffer from instabilities in certain situations \cite{Peherstorfer2020}. Adaptation \cite{Eftang2012b} and localization \cite{Peherstorfer2014} of the low-dimensional subspaces have been proposed to improve the stability of ROMs via empirical interpolation. Oversampling uses more interpolation points than basis functions so that the nonlinear terms are approximated via least-squares regression rather than via interpolation \cite{Peherstorfer2020}.  Oversampling methods such as  gappy POD \cite{everson95karhunenloeve,willcox06:_gappy}, missing point estimation \cite{Astrid2008,Zimmermann2016}, Gauss-Newton with approximated tensors \cite{Carlberg2013}, and generalized EIM (GEIM) \cite{Argaud2017} can provide more stable and accurate approximations than empirical interpolation especially when the samples are perturbed due to noise turbulence, and numerical inaccuracies.} 





The original empirical interpolation method (EIM) \cite{Barrault2004a,Grepl2007a}  was proposed as an efficient model reduction technique to render the complexity of evaluating the nonlinear terms independent of the dimension of the truth approximation. The main idea is to replace any nonlinear term with a reduced basis expansion expressed as a linear combination of pre-computed basis functions and parameter-dependent coefficients. The coefficients are determined efficiently by an inexpensive and stable interpolation procedure. In the empirical interpolation method, the basis functions are instances of the nonlinear function at $N$ parameter points in a sample set $S_N$. Therefore, the number of basis functions and interpolation points can not exceed $N$. In order to improve the approximation accuracy, we must increase $N$ at the expense of increasing the offline cost because we need to evaluate the FOM for all parameter points in $S_N$.

 
We seek to improve the accuracy without increasing the size of the sample set. To this end, we employ the partial derivatives of the nonlinear function evaluated at parameter points in $S_N$ to construct additional interpolating  functions and interpolation points. The resulting method is called the first-order empirical interpolation method (FOEIM) to distinguish itself from the EIM that does not use first-order partial derivatives. The  proposed method is applied to  nonlinear elliptic PDEs and compared to both the  Galerkin reduced basis approximation and the EIM. Numerical results are presented to assess the performance of the three reduced basis approaches.


Indeed, Hermitian spaces built upon sensitivity derivatives of the field variable with respect to the parameter ~\cite{Guillaume97} or, more generally, Lagrange-Hermitian spaces~\cite{ito98:_reduc_basis_method_contr_probl_gover_pdes} were long considered for the RB approximation of parametrized PDEs. \revise{We emphasize that our method does not require {\em sensitivity} derivatives of the field variable with respect to the parameters, which are often more expensive to compute than the field variable itself because they are obtained by differentiating  the underlying PDEs with respect to the parameters and solving the resulting PDEs. In fact, our method requires {\em partial} derivatives of the nonlinear terms with respect to the field variable and the parameters, which are  inexpensive to compute if the field variable is already computed. This is because evaluating the partial derivatives of the nonlinear terms has a similar cost as evaluating the nonlinear terms.}


The paper is organized as follows. In Section 2, we introduce the first-order empirical interpolation method.  The model problem, reduced basis approximations, and computational considerations for nonlinear elliptic problems and nonlinear diffusion equations are then discussed in Sections 3 and 4, respectively. Numerical results are included in each section in order to assess our method. Finally, in Section 5, we make a number of concluding remarks on the results as well as future work.

\section{First-order empirical interpolation}

\subsection{The interpolation problem}

Let $\Omega \subset  \mathbb{R}^d$ be the physical domain in which the spatial point $\bm x$ resides. Let $\mathcal{D} \subset \mathbb{R}^P$ be the parameter domain in which our $P$-tuple parameter point $\bm \mu$ resides. Let $u(\bm x, \bm \mu) \in L^\infty (\Omega \times \mathcal{D})$ be a parameter-dependent function of sufficient regularity. We consider the problem of approximating a nonlinear function $g(u(\bm x, \bm \mu), \bm \mu)$ by a reduced basis expansion $g_M(\bm x, \bm \mu)$, where $g(u, \bm \mu)$ is generally nonlinear with respect to the first argument. We assume that the partial derivatives, $g'_u(u, \bm \mu) \equiv \partial g(u, \bm \mu)/\partial u$ and $g'_{\bm \mu}(u, \bm \mu) \equiv \partial g(u, \bm \mu)/\partial \bm \mu$, are bounded everywhere in $\Omega \times \mathcal{D}$.

We assume that we are given a  set of interpolating functions $W_M^g = \mbox{span} \{\psi_m(\bm x), 1 \le m \le M\}$ and a set of interpolation points $T_M = \{\widehat{\bm x}_1, \ldots, \widehat{\bm x}_M\}$. The RB expansion $g_M(\bm x, \bm \mu)$ is expressed as 
\begin{equation}
\label{eq1w}
g_M(\bm x, \bm \mu) = \sum_{m=1}^M \beta_{M,m}(\bm \mu) \psi_m(\bm x)   ,
\end{equation}
where the coefficients $\beta_{M,m}(\bm \mu), 1 \le m \le M,$ are found as the solution of the following linear system 
\begin{equation}
\label{eq2w}
\sum_{m=1}^M  \psi_m(\widehat{\bm x}_k)   \beta_{M,m}(\bm \mu) = g(u(\widehat{\bm x}_k, \bm \mu), \bm \mu), \quad 1 \le k \le M .
\end{equation}
It is convenient to compute the coefficient vector $\bm \beta_M(\bm \mu)$ as follows
\begin{equation}
\label{eqcoeff}
\bm \beta_M(\bm \mu) = \bm B^{-1}_M \bm b_M(\bm \mu),
\end{equation}
where $\bm B_M \in \mathbb{R}^{M \times M}$ has entries $B_{M,km} =  \psi_m(\widehat{\bm x}_k)$ and $\bm b_M(\bm \mu) \in \mathbb{R}^M$ has entries $b_{M,k}(\bm \mu) =  g(u(\widehat{\bm x}_k, \bm \mu), \bm \mu)$. Since $g_M(\widehat{\bm x}_k, \bm \mu) = g(\widehat{\bm x}_k, \bm \mu), 1 \le k \le M,$ the RB expansion $g_M(\cdot, \bm \mu)$ is an interpolant of $g(\cdot,\bm\mu)$ over $M$ interpolation points. We define the associated error as
\begin{equation}
\label{eq4w}
\varepsilon_M(\bm \mu) = \|g(u(\bm x, \mu), \bm \mu) - g_M(\bm x, \bm \mu)\|_{L^\infty(\Omega)} ,    
\end{equation}
which is a measure of the approximation  accuracy for any given $\bm \mu \in \mathcal{D}$. The complexity of computing the coefficient vector  $\bm \beta_M(\bm \mu)$ in (\ref{eqcoeff}) for any given $\bm \mu$ is $O({M^2})$  because the matrix $\bm B_M^{-1}$ is pre-computed and stored.

The approximation accuracy depends critically on both the interpolating subspace $W_M^g$ and the interpolation point set $T_M$.   In what follows, we review the original empirical interpolation and introduce a first-order empirical interpolation for constructing $W_M^g$ and $T_M$.


\subsection{Empirical interpolation method}
\label{section2.2}

The empirical interpolation method was first proposed in \cite{Barrault2004a} to enable efficient RB approximation of PDEs with nonaffine parameter dependence and subsequently extended to nonlinear parametrized PDEs \cite{Grepl2007a}. We follow the empirical interpolation procedure described in \cite{Maday2008} to construct $W_M^g$ and $T_M$. We assume that we are given a sample set $S_N = \{\bm \mu_1 \in \mathcal{D}, \ldots, \bm \mu_N \in \mathcal{D} \}$. We then introduce two separate RB spaces 
\begin{subequations}
\label{EIMspaces}
\begin{alignat}{2}
W_N^u & = \mbox{span} \{\zeta_n(\bm x) \equiv u(\bm x, \bm \mu_n), 1 \le n \le N\}, \\
W_N^g  & = \mbox{span} \{\xi_n(\bm x) \equiv g(\zeta_n(\bm x), \bm \mu_n), 1 \le n \le N\} .
\end{alignat}
\end{subequations}
We assume that the dimension of these two RB spaces is equal to $N$. First, we find
\begin{equation}
j_1 = \arg \max_{1 \le j \le N} \|\xi_j  \|_{L^\infty(\Omega)},  
\end{equation}
and set
\begin{equation}
\widehat{\bm x}_1 = \arg \sup_{\bm x \in \Omega} |\xi_{j_1}(\bm x)|, \qquad \psi_1(\bm x) = \xi_{j_1}(\bm x)/\xi_{j_1}(\widehat{\bm x}_1).      
\end{equation}
Then for $M = 2, \ldots, N$, we solve the linear systems
\begin{equation}
\sum_{m=1}^{M-1}  \psi_m(\widehat{\bm x}_k)   \sigma_{nm} = \xi_n(\widehat{\bm x}_k), \quad 1 \le k \le M-1 ,
\end{equation}
for $n = 1,\ldots, N$; we find
\begin{equation}
j_{M} = \arg \max_{1 \le n \le N} \| \xi_n(\bm x) - \sum_{m=1}^{M-1}  \sigma_{nm} \psi_m(\bm x)\|_{L^\infty(\Omega)},  
\end{equation}
and set
\begin{equation}
\widehat{\bm x}_M = \arg \sup_{\bm x \in \Omega} |r_M(\bm x)|, \qquad \psi_M(\bm x) = r_M(\bm x)/r_M(\widehat{\bm x}_M)    ,   
\end{equation}
where the residual function $r_M(\bm x)$ is given by
\begin{equation}
r_M(\bm x) = \xi_{j_M}(\bm x) - \sum_{m=1}^{M-1}  \sigma_{j_M m} \psi_m(\bm x) .
\end{equation}
In essence, the interpolation point $\widehat{\bm x}_M$ and the basis function $\psi_M(\bm x)$ are the maximum point and the normalization of the residual function $r_M(\bm x)$ which results from the interpolation of $\xi_{j_M}(\bm x)$ by using the previous interpolation point set $T_{M-1}$ and basis set $W_{M-1}^g$. \revise{Note that the ordering of the snapshot functions in the space $W_N^g$ does not  affect the interpolation points and basis functions. In order words, the EIM yields exactly the same interpolation points and basis functions regardless of the ordering of the functions in $W_M^g$.}

The EIM procedure constructs nested sets of interpolation points $T_M, 1 \le M \le N,$ and nested subspaces $W_M^g = \mbox{span} \{\psi_m, 1 \le m \le M\}, 1 \le M \le N$. It is shown in \cite{Maday2008} this construction of the interpolation points  and the basis functions is well-defined, meaning that the basis functions are linearly independent and, in particular, the matrix $B_{M,km} = \psi_m(\widehat{\bm x}_k)$ is invertible. Furthermore,  the interpolation is exact for all $\xi$ in $W_M^g$. 


For any given sample set $S_N$, the number of interpolation points in $T_M$ and basis functions in $W_M^g$ can not exceed $N$. The choice of the sample set $S_N$  may have a crucial impact on the performance of the method. In order to achieve a desired accuracy, one may need to choose the sample set $S_N$ conservatively large enough. In the context of the reduced basis approach for parametrized PDEs, a large sample set $S_N$ will incur a high computational cost for the offline stage \revise{because we must compute $N$ solutions of the FOM to construct the function spaces defined in (\ref{EIMspaces}). Therefore, it is desirable to keep $S_N$ as small as possible, while being able to achieve the desired accuracy. For nonlinear elliptic PDEs considered herein as well as other nonlinear parametrized PDEs considered elsewhere \cite{Manzoni2012,Grepl2007a,Nguyen2007,Drohmann2012,Hesthaven2014,Hesthaven2022,Chen2021}, the computational complexity of ROMs via empirical interpolation during the online stage is $O(MN^2 + N^3)$ per Newton iteration. While the online cost scales cubically with $N$, it scales linearly with $M$. Therefore, we usually use $M > N$ to make the approximation of the nonlinear terms highly accurate, thereby obtaining  stable and accurate ROMs. This can only happen if we use a larger sample set $S_M = \{\bm \mu_1 \in \mathcal{D}, \ldots, \bm \mu_M \in \mathcal{D} \}$ so that $S_N \subset S_M$ to construct the function space $W_M^g = \mbox{span} \{\xi_m(\bm x) \equiv g(u(\bm x, \bm \mu_m), \bm \mu_m), 1 \le m \le M\} $, which  requires $M$ solutions of the FOM in the offline stage.}

\subsection{First-order empirical interpolation method}
\label{section2.3}


The main idea of the first-order empirical interpolation method is based on the first-order Taylor expansion
\begin{equation}
g_n^{(1)}(u, \bm \mu) = g(\zeta_n, \bm \mu_n) + \frac{\partial g(\zeta_n, \bm \mu_n)}{\partial u} (u - \zeta_n) + \frac{\partial g(\zeta_n, \bm \mu_n)}{\partial \bm \mu} \cdot (\bm \mu - \bm \mu_n)    
\end{equation}
for $1 \le n \le N$. Each $g_n^{(1)}(u, \bm \mu)$ is a first-order approximation to $g(u, \bm \mu)$ at  $(\zeta_n, \bm \mu_n)$. As $g(\zeta_n, \bm \mu_n)$, is a zero-order approximation to $g(u, \bm \mu)$,  the original empirical interpolation method only uses zero-order approximations to construct the interpolation points and interpolating functions. In order to improve the method, we  use the first-order partial derivatives to construct additional interpolation points and basis functions as follows. 

\revise{In addition to $W_N^u$ and $W_N^g$ defined in (\ref{EIMspaces}), we require $\frac{\partial g(\zeta_n, \bm \mu_n)}{\partial u}$ and $\frac{\partial g(\zeta_n, \bm \mu_n)}{\partial \bm \mu}$ for $1 \le n \le N$. These partial derivatives are inexpensive to compute if $\zeta_n, 1 \le n \le N,$ are already computed. In the context of this paper, the $\zeta_n$ are numerical solutions of the parametrized PDEs at the  parameter points in the sample set $S_N$, which can be computationally expensive. Once the $\zeta_n$ are computed, evaluating the partial derivatives of $g$ can have a similar cost as evaluating $g$ itself. We then introduce
\begin{equation}
\label{eqrho16}
\vartheta_{(n-1)N + k}(\bm x) = \frac{\partial g(\zeta_n(\bm x), \bm \mu_n)}{\partial u} (\zeta_k(\bm x) - \zeta_n(\bm x)), 
\end{equation}
and
\begin{equation}
\label{eqrho17}
\vartheta_{N^2 + (n-1)N + k}(\bm x) = \frac{\partial g(\zeta_n(\bm x), \bm \mu_n)}{\partial \bm \mu} \cdot (\bm \mu_k - \bm \mu_n),     
\end{equation}
for $1 \le k, n \le N$. Although there are $2N^2$ functions $\vartheta_n(\bm x), 1 \le n \le 2N^2,$ they are not linearly independent because there are $2N$ zero functions in (\ref{eqrho16}) and (\ref{eqrho17}) corresponding to $k = n$. As a result, there are at most $2(N^2-N)$ non-zero functions. Furthermore, if $g$ does not explicitly depend on $\bm \mu$ then the functions  in (\ref{eqrho17}) are zero because we have $\frac{\partial g(\zeta_n, \bm \mu_n)}{\partial \bm \mu} = 0$. In this case, there are at most $N^2 - N$ non-zero functions. Let $K$ be the number of linearly independent non-zero functions in the set $\{\vartheta_n(\bm x), 1 \le n \le 2N^2\}$. Without loss of generality, we denote those linearly independent functions as $\varrho_k(\bm x), 1 \le k \le K,$ and introduce the the following function space
\begin{equation}
\label{taylorspace}
W_K^{\partial g}   = \mbox{span} \{\varrho_k(\bm x), 1 \le k \le K \} .    
\end{equation}
The dimension of this function space will depend on the functional form of the nonlinear function $g$.} 

\revise{We pursue two different FOEIM algorithms to construct the basis functions and interpolation points. For the FOEIM Algorithm I,} the first $N$ interpolation points and basis functions are obtained by using the empirical interpolation procedure described in the previous subsection. The additional interpolation points and basis functions are calculated as follows. For $M = N+1, \ldots, N + K$, we solve the linear systems
\begin{equation}
\sum_{m=1}^{M-1}  \psi_m(\widehat{\bm x}_k)   \sigma_{nm} = \varrho_n(\widehat{\bm x}_k), \quad 1 \le k \le M-1 ,
\end{equation}
for $n = 1,\ldots, K$; we find
\begin{equation}
j_{M} = \arg \max_{1 \le n \le K} \| \varrho_n(\bm x) - \sum_{m=1}^{M-1}  \sigma_{nm} \psi_m(\bm x)\|_{L^\infty(\Omega)},  
\end{equation}
and set
\begin{equation}
\widehat{\bm x}_M = \arg \sup_{\bm x \in \Omega} |r_M(\bm x)|, \qquad \psi_M(\bm x) = r_M(\bm x)/r_M(\widehat{\bm x}_M)    ,   
\end{equation}
where the residual function $r_M(\bm x)$ is given by
\begin{equation}
r_M(\bm x) = \varrho_{j_M}(\bm x) - \sum_{m=1}^{M-1}  \sigma_{j_M m} \psi_m(\bm x) .
\end{equation}
\revise{The FOEIM Algorithm I constructs the first $N$ interpolation points and basis functions by applying the EIM  to the Lagrange space $W_N^{g}$, and  the next $K$ interpolation points and basis functions by applying the EIM to the Taylor space $W_K^{\partial g}$. Hence, the FOEIM Algorithm I includes partial derivative information only when $M > N$.}

\revise{For the FOEIM Algorithm II, we combine the Lagrange space $W_N^{g}$ and the Taylor space $W_K^{\partial g}$ into the following Lagrange-Taylor  space
\begin{equation}
\label{lagtaylorspace}
W_L^{{\rm LT}g}  = W_N^{g} \oplus W_K^{\partial g} \equiv \mbox{span}\{\varsigma_1, \ldots, \varsigma_L\} ,   
\end{equation}
where $L = N + K$ is the dimension of the Lagrange-Taylor space $W_L^{{\rm LT}g}$. We then apply the EIM  described in the previous subsection to $W_L^{{\rm LT}g}$ to obtain $L$  interpolation points and basis functions. First, we find
\begin{equation}
\label{eq23w}
j_1 = \arg \max_{1 \le j \le L} \|\varsigma_j  \|_{L^\infty(\Omega)},  
\end{equation}
and set
\begin{equation}
\widehat{\bm x}_1 = \arg \sup_{\bm x \in \Omega} |\varsigma_{j_1}(\bm x)|, \qquad \psi_1(\bm x) = \varsigma_{j_1}(\bm x)/\varsigma_{j_1}(\widehat{\bm x}_1).      
\end{equation}
For $M = 2, \ldots, L$, we solve the linear systems
\begin{equation}
\sum_{m=1}^{M-1}  \psi_m(\widehat{\bm x}_k)   \sigma_{lm} = \varsigma_l(\widehat{\bm x}_k), \quad 1 \le k \le M-1 , 1 \le l \le L,
\end{equation}
we then find
\begin{equation}
\label{argmax}
j_{M} = \arg \max_{1 \le l \le L} \| \varsigma_l(\bm x) - \sum_{m=1}^{M-1}  \sigma_{lm} \psi_m(\bm x)\|_{L^\infty(\Omega)}, 
\end{equation}
and set
\begin{equation}
\widehat{\bm x}_M = \arg \sup_{\bm x \in \Omega} |r_M(\bm x)|, \qquad \psi_M(\bm x) = r_M(\bm x)/r_M(\widehat{\bm x}_M)    ,   
\end{equation}
where the residual function $r_M(\bm x)$ is given by
\begin{equation}
\label{eq28w}
r_M(\bm x) = \varsigma_{j_M}(\bm x) - \sum_{m=1}^{M-1}  \sigma_{j_M m} \psi_m(\bm x) .
\end{equation}
Note that the interpolation points and basis functions generated by the FOEIM Algorithm II are independent of the ordering of the functions in the space $W_L^{{\rm LT}g}$. Therefore, unlike the FOEIM Algorithm I, the FOEIM Algorithm II  includes partial derivative information even when $M < N$.  
}

\revise{For any given parameter sample set $S_N$, the two FOEIM algorithms construct nested sets of interpolation points $T_M = \{\widehat{\bm x}_m\}_{m=1}^M, 1 \le M \le L,$ and nested subspaces $W_M^g = \mbox{span} \{\psi_m, 1 \le m \le M\}, 1 \le M \le L$. Although the same notations are used to label the interpolation points and basis functions constructed using the EIM and FOEIM algorithms, each algorithm generate different sets of interpolation points and basis functions. We point out the fact that the interpolation points and basis functions of the EIM are a subset of those of the FOEIM Algorithm I. However, this is not the case for the FOEIM Algorithm II. Indeed, the first $N$ interpolation points and basis functions of the FOEIM Algorithm II can be different from those of the EIM. For the same sample set $S_N$, the FOEIM algorithms improve the  EIM by leveraging the first-order partial derivatives to generate interpolation points and basis functions.}


\revise{The FOEIM algorithms have all the desirable properties of the  EIM. The algorithms are well-defined in the sense that the basis functions are linearly independent and the matrix $\bm B_M$ with entries $B_{M, ij} = \psi_j(\widehat{\bm x}_i), 1 \le i,j, \le M,$ is invertible. We follow \cite{Maday2008} to show an intermediate result:} 

\revise{
\begin{lemma}
Assume that $W_{M-1}^g = {\rm span} \: \{ \psi_1, \dots,\psi_{M-1} \}$ is of dimension $M-1$ and that $\bm B_{M-1}$ is invertible, then we have $v_{M-1} = v$ for any $v \in W_{M-1}^g$, where $v_{M-1}$ is the interpolant of $v$ as given below
\begin{equation}
v_{M-1} = \sum_{j=1}^{M-1} \beta_{M-1,j} \psi_{j} \ ,
\end{equation}
where the $\beta_{M-1,j}$ is the solution of
\begin{equation}
\sum_{j=1}^{M-1} \psi_j(\widehat{\bm x}_i) \beta_{M-1,j} = v(\widehat{\bm x}_i), \quad i = 1, \ldots, M-1 \ .
\end{equation}
In other words, the interpolation is exact for all v in $W^g_{M-1}$.
\end{lemma}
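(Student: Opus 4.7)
The plan is to exploit the invertibility of $\bm B_{M-1}$ to show that the interpolation coefficients must coincide with the true expansion coefficients of $v$ in the basis $\{\psi_1, \ldots, \psi_{M-1}\}$.

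First, since $\dim W_{M-1}^g = M-1$ and $v \in W_{M-1}^g$, there exist unique coefficients $\alpha_1, \ldots, \alpha_{M-1}$ such that
\begin{equation}
v = \sum_{j=1}^{M-1} \alpha_j \psi_j .
\end{equation}
Evaluating this identity at each interpolation point $\widehat{\bm x}_i$ gives
\begin{equation}
v(\widehat{\bm x}_i) = \sum_{j=1}^{M-1} \psi_j(\widehat{\bm x}_i)\, \alpha_j , \quad i = 1, \ldots, M-1 ,
\end{equation}
which is the linear system $\bm B_{M-1}\bm \alpha = \bm v$, where $\bm v \in \mathbb{R}^{M-1}$ has entries $v(\widehat{\bm x}_i)$.

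Next, by definition the interpolation coefficients $\bm \beta_{M-1}$ satisfy the same system $\bm B_{M-1}\bm \beta_{M-1} = \bm v$. Since $\bm B_{M-1}$ is invertible by assumption, the solution is unique, so $\bm \beta_{M-1} = \bm \alpha$. Substituting back into the definition of $v_{M-1}$ yields
\begin{equation}
v_{M-1} = \sum_{j=1}^{M-1} \beta_{M-1,j} \psi_j = \sum_{j=1}^{M-1} \alpha_j \psi_j = v ,
\end{equation}
which is the claimed exactness.

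There is no real obstacle here: the statement is essentially a uniqueness argument, and the only ingredients needed are the hypotheses themselves (the basis representation guaranteed by $\dim W_{M-1}^g = M-1$, and the uniqueness of the linear-system solution guaranteed by the invertibility of $\bm B_{M-1}$). The lemma will then be invoked later as an inductive step to justify that the greedy FOEIM selection of $\widehat{\bm x}_M$ via the residual $r_M$ produces a genuinely new, non-degenerate basis function $\psi_M$ and keeps $\bm B_M$ invertible.
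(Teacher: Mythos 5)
Your proof is correct and follows essentially the same argument as the paper: expand $v$ in the basis $\{\psi_j\}$, evaluate at the interpolation points, and use the invertibility of $\bm B_{M-1}$ to conclude the interpolation coefficients coincide with the expansion coefficients. The only difference is notational (your $\bm\alpha$ versus the paper's $\bm\gamma_{M-1}$).
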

\begin{proof}
For $v \in W^g_{M-1}$, which can be expressed as $v(\bm x) = \sum_{m=1}^{M-1} \gamma_{M-1,m} \psi_m(\bm x)$, we consider $\bm x = \widehat{\bm x}_m, 1 \leq m \leq M-1,$ to arrive at $v(\widehat{\bm x}_m) = \sum_{j=1}^{M-1} \gamma_{M-1,j} \psi_j(\widehat{\bm x}_m), 1 \leq m \leq M-1$. It thus follows from the invertibility of $\bm B_{M-1}$ that $\bm \beta_{M-1} = \bm \gamma_{M-1}$; and hence $v_{M-1} = v$. 
\end{proof}

We  thus obtain the following theorem whose proof is  given in \cite{Maday2008}:
\begin{thm}
Assume that the dimension of the Lagrange-Taylor space $W_L^{{\rm LT}g}$ is $L$; then, for any $M \leq L$, the space $W_M^g = \mbox{span}\{\psi_1, \ldots, \psi_M\}$ is of dimension $M$. In addition, the matrix $\bm B_M$ is lower triangular with unity diagonal.   
\end{thm}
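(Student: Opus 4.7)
The plan is to proceed by induction on $M$, using the preceding lemma and the dimension hypothesis on $W_L^{{\rm LT}g}$ as the two key ingredients. The base case $M=1$ is essentially immediate: since $\dim W_L^{{\rm LT}g} = L \ge 1$ the collection $\{\varsigma_1,\ldots,\varsigma_L\}$ contains at least one nonzero function, so the maximizer in \eqref{eq23w} yields $\|\varsigma_{j_1}\|_{L^\infty(\Omega)} > 0$, hence $\varsigma_{j_1}(\widehat{\bm x}_1) \ne 0$ and the normalization defining $\psi_1$ is well-posed. By construction $\psi_1(\widehat{\bm x}_1) = 1$, so $\bm B_1 = [1]$ is trivially lower triangular with unit diagonal, and $W_1^g$ has dimension $1$.

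For the inductive step, I would assume the claim for $M-1$ (so $W_{M-1}^g$ has dimension $M-1$ and $\bm B_{M-1}$ is lower triangular with unit diagonal, in particular invertible) and show it holds for $M$. The first task is to verify that the residual $r_M$ in \eqref{eq28w} does not vanish identically, so that $\psi_M$ is well defined. If $r_M \equiv 0$, then the argmax in \eqref{argmax} would force every $\varsigma_l$ to equal its interpolant $\sum_{m=1}^{M-1} \sigma_{lm}\psi_m$, whence $\varsigma_l \in W_{M-1}^g$ for all $l$. This would yield $W_L^{{\rm LT}g} \subseteq W_{M-1}^g$, contradicting $\dim W_L^{{\rm LT}g} = L \ge M > M-1$. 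Hence $r_M$ attains a strictly positive supremum and the definition $\psi_M = r_M/r_M(\widehat{\bm x}_M)$ is legitimate, giving immediately $\psi_M(\widehat{\bm x}_M) = 1$.

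Next I would read off the lower-triangular, unit-diagonal structure of $\bm B_M$. The interpolation equations defining $\sigma_{j_M,m}$ give $\sum_{m=1}^{M-1} \psi_m(\widehat{\bm x}_k)\sigma_{j_M,m} = \varsigma_{j_M}(\widehat{\bm x}_k)$ for $1 \le k \le M-1$, so $r_M(\widehat{\bm x}_k) = 0$ and hence $\psi_M(\widehat{\bm x}_k) = 0$ for $k < M$. Combined with the inductive structure of $\bm B_{M-1}$ and $\psi_M(\widehat{\bm x}_M)=1$, this shows $\bm B_M$ is lower triangular with unit diagonal, and in particular nonsingular.

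Finally, to conclude $\dim W_M^g = M$, I would argue $\psi_M \notin W_{M-1}^g$ by contradiction, invoking the lemma. If $\psi_M \in W_{M-1}^g$, the lemma (applied with $v = \psi_M$, which is permissible because $\bm B_{M-1}$ is invertible by the inductive hypothesis) implies that the interpolant of $\psi_M$ on the first $M-1$ points coincides with $\psi_M$ itself. But the interpolation data are $\psi_M(\widehat{\bm x}_k) = 0$ for $1 \le k \le M-1$, so the unique solution of the interpolation system is the zero coefficient vector, forcing $\psi_M \equiv 0$, which contradicts $\psi_M(\widehat{\bm x}_M) = 1$. Hence $\{\psi_1,\ldots,\psi_M\}$ are linearly independent and $\dim W_M^g = M$, closing the induction. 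The main obstacle is the non-degeneracy step showing $r_M \not\equiv 0$; this is the only place where the hypothesis $\dim W_L^{{\rm LT}g} = L$ is genuinely needed, and all subsequent structural facts follow by routine bookkeeping with the preceding lemma.
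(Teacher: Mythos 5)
Your proof is correct and follows essentially the same route as the paper's: induction on $M$, with the dimension hypothesis on $W_L^{{\rm LT}g}$ guaranteeing $\|r_M\|_{L^\infty(\Omega)}>0$, Lemma~1 supplying the contradiction that establishes $\dim W_M^g = M$, and the identities $r_M(\widehat{\bm x}_k)=0$ for $k<M$ together with $\psi_M(\widehat{\bm x}_M)=1$ yielding the lower-triangular unit-diagonal structure of $\bm B_M$. The only cosmetic difference is that you phrase the linear-independence contradiction in terms of $\psi_M \in W_{M-1}^g$ while the paper phrases it in terms of $\varsigma_{j_M} \in W_{M-1}^g$; these are equivalent, and your version of the non-degeneracy step ($r_M \not\equiv 0$) is in fact spelled out more completely than in the paper.
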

\begin{proof}
We shall proceed by induction. Clearly, $W_1 = {\rm span} \: \{ \psi_1 \}$ is of dimension 1 and the matrix $\bm B_1 = 1$ is invertible.  Next we assume that $W_{M-1}^g = {\rm span} \: \{ \psi_1, \dots, \psi_{M-1} \}$ is of dimension $M-1$ and the matrix $B^{M-1}$ is invertible; we must then prove ({\it i}) $W_M^g = \mbox{span}\{\psi_1, \ldots, \psi_M\}$ is of dimension $M$ and ({\it ii}) the matrix $\bm B_M$ is invertible. To prove ({\it i}), we note from our ``arg max'' construction (\ref{argmax}) and the assumption stated in  Theorem 1 that $\|r_M(\bm x)\|_{L^\infty(\Omega)} > 0$. Hence, if $\dim(W_M^g) \neq M$, we have $\varsigma_{j_M} \in W^g_{M-1}$ and thus $\|r_M(\bm x)\|_{L^\infty(\Omega)} = 0$ by Lemma 1; however, the latter contradicts $\|r_M(\bm x)\|_{L^\infty(\Omega)} > 0$.  To prove ({\em ii\/}), we just note from the
  construction procedure (\ref{eq23w})-(\ref{eq28w}) that $B_{{M},{i \: j}} = r_j(\widehat{\bm x}_i) /r_j (\widehat{\bm x}_j) = 0$
  for $i < j$; that $B_{{M},{i \: j}} = r_j(\widehat{\bm x}_i) /r_j (\widehat{\bm x}_j) = 1$ for $i =
  j$; and that $\left|B_{{M},{i \: j}} \right| = \left|r_j(\widehat{\bm x}_i) /r_j (\widehat{\bm x}_j) \right| \leq 1$ for $i > j$ since $\widehat{\bm x}_j = \arg  \:
  \max_{x \in \Omega} |r_j (\bm x)|, 1 \leq j \leq M$. Hence, $\bm B_{M}$ is
  lower triangular with unity diagonal.
\end{proof}
}

\revise{
This theorem implies that the procedure yields unique interpolation points and linearly independent basis functions as long as $M$ is less than or equal to the dimension of the function space used to construct the basis functions and the interpolation points. Furthermore, the procedure reorders members of the function space in such a way that $W_M^g = \mbox{span} \{\varsigma_{j_1}, \ldots, \varsigma_{j_M}\}$ = $\mbox{span} \{\psi_{1}, \ldots, \psi_{M}\}$. Hence, the procedure allows for selecting a subset of  basis functions from a larger set. The error analysis of the interpolation procedure involves the Lebesgue constant as follows
\begin{lemma}
Let $g_M(\bm x, \bm \mu)$ defined by (\ref{eq1w}) be the interpolant of the parametrized function $g(u(\bm x, \bm \mu), \bm \mu)$. The interpolation error (\ref{eq4w}) is bounded by
\begin{equation}
\label{EIMbound}
\varepsilon_M(\bm \mu) \le (1 + \Lambda_M)  \inf_{v_M \in W_M^g}  \|g(u(\bm x, \bm \mu), \bm \mu) - v_M \|_{L^\infty(\Omega)}, 
\end{equation}
where $\Lambda_M$ is the Lebesgue constant 
\begin{equation}
\Lambda_M = \sup_{\bm x \in \Omega} \sum_{j=1}^M \left|\sum_{m=1}^M \psi_m(\bm x) [\bm B_M]^{-1}_{mk} \right| .    
\end{equation}
\end{lemma}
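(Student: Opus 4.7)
The plan is to follow the classical Lebesgue constant argument, which hinges on the fact that the interpolation operator $\mathcal{I}_M : v \mapsto \sum_{m=1}^M \beta_{M,m}(v) \psi_m$ defined by (\ref{eq1w})--(\ref{eq2w}) is a linear projection onto $W_M^g$. The projection property is an immediate consequence of Lemma 1: for any $v \in W_M^g$, the interpolant $v_M$ coincides with $v$, so $\mathcal{I}_M v = v$ whenever $v \in W_M^g$.

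First, I would exploit the projection property together with linearity of $\mathcal{I}_M$ to obtain, for every $v_M \in W_M^g$,
\begin{equation*}
g(u(\bm x,\bm \mu),\bm \mu) - g_M(\bm x,\bm \mu) = \bigl(g - v_M\bigr) - \mathcal{I}_M\bigl(g - v_M\bigr),
\end{equation*}
where I have used $g_M = \mathcal{I}_M g$ and $\mathcal{I}_M v_M = v_M$. A triangle inequality in $L^\infty(\Omega)$ then yields
\begin{equation*}
\varepsilon_M(\bm \mu) \le \|g - v_M\|_{L^\infty(\Omega)} + \|\mathcal{I}_M(g - v_M)\|_{L^\infty(\Omega)}.
\end{equation*}

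Next I would bound the operator norm of $\mathcal{I}_M$ on $L^\infty(\Omega)$. Using the closed form $\bm \beta_M = \bm B_M^{-1} \bm b_M$ from (\ref{eqcoeff}), one may write for an arbitrary $f \in L^\infty(\Omega)$,
\begin{equation*}
\mathcal{I}_M f(\bm x) = \sum_{k=1}^M \left( \sum_{m=1}^M \psi_m(\bm x) [\bm B_M]^{-1}_{mk} \right) f(\widehat{\bm x}_k).
\end{equation*}
Taking absolute values, pulling $\|f\|_{L^\infty(\Omega)}$ out since $|f(\widehat{\bm x}_k)| \le \|f\|_{L^\infty(\Omega)}$, and taking the supremum over $\bm x \in \Omega$, one arrives at $\|\mathcal{I}_M f\|_{L^\infty(\Omega)} \le \Lambda_M \|f\|_{L^\infty(\Omega)}$ with $\Lambda_M$ precisely as defined in the statement.

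Combining the two estimates gives $\varepsilon_M(\bm \mu) \le (1+\Lambda_M)\|g - v_M\|_{L^\infty(\Omega)}$, and taking the infimum over $v_M \in W_M^g$ delivers (\ref{EIMbound}). The argument is essentially bookkeeping; the only subtle point is establishing the projection identity $\mathcal{I}_M v = v$ on $W_M^g$, but Lemma 1 (together with Theorem 1 ensuring invertibility of $\bm B_M$) already provides this, so no real obstacle remains.
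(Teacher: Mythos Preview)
Your argument is correct and is precisely the classical Lebesgue-constant proof. The paper itself does not give a proof of this lemma but simply cites \cite{Barrault2004a}; the argument there is exactly the projection-plus-operator-norm computation you outline, so your proposal matches the intended proof.
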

\noindent
The last term in the right hand side of the above inequality is known as the best approximation error. This Lemma has been proven in \cite{Barrault2004a}. Furthermore, an upper-bound for the Lebesgue constant is $2^M-1$ \cite{Maday2008}.  
}


\revise{For nonlinear elliptic PDEs considered herein, the computational complexity of ROMs via first-order empirical interpolation during the online stage is also $O(MN^2 + N^3)$ per Newton iteration. Therefore, we  can improve the accuracy of ROMs by using a larger number of interpolation points and basis functions to approximate the nonlinear terms. Unlike the EIM procedure described earlier,  the FOEIM procedure requires $N$ solutions of the FOM in the offline stage even when we choose $M > N$ owing to the derivative-based function space $W_K^{\partial g}$.}


\revise{
\subsection{Empirical regression procedure}


Empirical regression extends empirical interpolation by using more interpolation points than basis functions. Specifically, we  use $N$ basis functions from the subspace $W_N^g = \mbox{span} \{\psi_1, \ldots, \psi_N\}$ and $M$ interpolation points from $T_M = \{\widehat{\bm x}_1,\ldots,\widehat{\bm x}_M\}$ to define an approximation $g_{NM}(\bm x, \bm \mu)$ to $g(u(\bm x, \bm \mu), \bm \mu)$ as follows 
\begin{equation}
\label{eq1wr}
g_{NM}(\bm x, \bm \mu) = \sum_{n=1}^N \beta_{N,n}(\bm \mu) \psi_n(\bm x)   .
\end{equation}
Here the coefficients $\beta_{N,n}(\bm \mu), 1 \le n \le N,$ are found as the solution of the following least-squares problem
\begin{equation}
\label{eq2wr}
\bm \beta_{N}(\bm \mu) = \arg \min_{\bm \gamma_N \in \mathbb{R}^N} \sum_{m=1}^M \left( \sum_{n=1}^N  \psi_n(\widehat{\bm x}_m)   \gamma_{N,n} - g(u(\widehat{\bm x}_m, \bm \mu), \bm \mu) \right)^2 .
\end{equation}
This least-squares problem reduces to the linear system (\ref{eq2w}) for $M=N$. In this case, $g_{NM}(\bm x, \bm \mu)$ interpolates $g(u(\bm x, \bm \mu), \bm \mu)$ exactly  at the interpolation points. For $M > N$, $g_{NM}(\bm x, \bm \mu)$ is the best fit in the 2-norm to $g(u(\bm x, \bm \mu), \bm \mu)$ at the interpolation points. In this case, it is convenient to compute the coefficient vector $\bm \beta_N(\bm \mu)$ as follows
\begin{equation}
\label{eqcoeffls}
\bm \beta_N(\bm \mu) = \bm C_{NM} b_M(\bm \mu),
\end{equation}
where $\bm C_{NM} = \left( \bm B_{NM} \bm B^T_{NM} \right)^{-1} \bm B_{NM}$, $\bm B_{NM} \in \mathbb{R}^{N \times M}$ has entries $B_{NM,nm} =  \psi_n(\widehat{\bm x}_m)$ and $\bm b_M(\bm \mu) \in \mathbb{R}^M$ has entries $b_{M,m}(\bm \mu) =  g(u(\widehat{\bm x}_m, \bm \mu), \bm \mu)$. The cost of evaluating $\bm \beta_N(\bm \mu)$ in (\ref{eqcoeffls}) is $O(MN)$ since we compute and store $\bm C_{NM}$ in the offline stage. We use the two FOEIM algorithms described earlier to generate interpolation points and basis functions for the empirical regression.
}

\revise{Empirical regression belongs to the class of oversampling methods like  gappy POD \cite{everson95karhunenloeve,willcox06:_gappy}, missing point estimation \cite{Astrid2008,Zimmermann2016}, Gauss-Newton with approximated tensors \cite{Carlberg2013}, and generalized EIM (GEIM) \cite{Argaud2017}. Empirical regression can provide more stable and accurate approximations than empirical interpolation for problems with noisy data \cite{Peherstorfer2020}. For model reduction of  nonlinear elliptic PDEs discussed in the next section, the online complexity of ROMs via empirical regression is also $O(MN^2 + N^3)$ per Newton iteration.} 


\subsection{A Gaussian parametrized function}

\revise{
We consider a parameter-dependent function $u(\bm x, \bm \mu) = \frac{1}{\sqrt{  (x_{1} - \mu_{1})^2 + (x_{2} -
    \mu_{2})^2 }}$ and a Gaussian parameterized function $g(u, \bm \mu) = \exp (-0.01 u^2)$ for $\bm x  \in \Omega \equiv \: (0,1) \: ^2$ and $\bm \mu \in \mathcal{D} \equiv [-1, -0.01]^2$. This
example is modified from the one in \cite{Grepl2007a}. Note that the partial derivatives $\partial g(u, \bm \mu)/\partial \bm \mu$ are zero for this particular function.  We choose for $S_{N_{\max}}$ a
deterministic grid of ${N_{\max}}= 8 \times 8$ parameter points over $\mathcal{D}$, and generate a sequence of nested sample sets $S_N \in S_{N_{\max}}$ for $N = 2 \times 2, 3 \times 3, \ldots, 8 \times 8$ by using the following logarithm distribution 
\begin{equation}
y(x) = a + (b-a)(1-\exp(-\alpha(x-a)/(b-a)))/(1-\exp(-\alpha)), 
\end{equation}
for $x \in [a, b]$, where $a = -1, b = -0.01, \alpha = 3$. The function $y(x)$ maps a uniform grid into a logarithmic grid such that the resulting grid is clustered toward $b$. As shown in Figure~\ref{fig0}, the parameter points in $S_{N_{\max}}$ are mainly distributed around the corner $(-0.01,-0.01)$ of the parameter domain. 
}

\begin{figure}[htbp]
	\centering
 \includegraphics[width=0.7\textwidth]{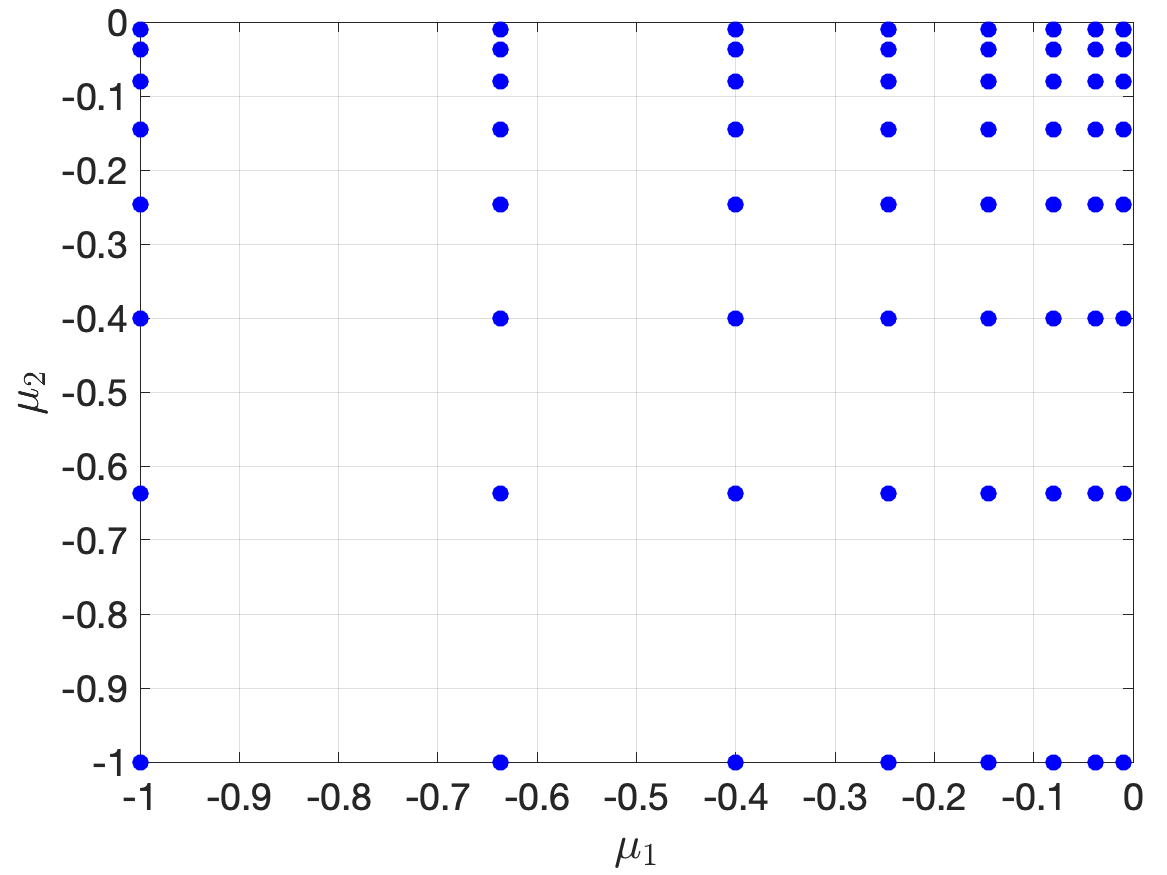}
 \caption{Parameter sample set $S_{N_{\max}}$ for the Gaussian parametrized function.}
	\label{fig0}
\end{figure}

\revise{For the results reported herein, FOEIM-I and FOEIM-II refer to FOEIM Algorithm I and FOEIM Algorithm II, respectively. Furthermore, FOERM-I (respectively, FOERM-II) refers to empirical regression which uses the interpolation points and the basis functions computed by FOEIM-I (respectively, FOEIM-II). We consider three different values of $M$, namely, $M = N, M = 2N$, and $M = 3N$, for those methods.  The interpolation points are plotted in Figure~\ref{fig1} for $M = 2N = 128$. We note that the interpolation points are largely allocated around
the origin $(0,0)$ of the physical domain $\Omega$. This is because $u(\bm x, \bm \mu)$ varies most significantly at  $\bm x=(0,0)$. The two FOEIM algorithms yield similar distributions of the interpolation points in the physical domain.}

\begin{figure}[htbp]
	\centering
	\begin{subfigure}[b]{0.49\textwidth}
		\centering
		\includegraphics[width=\textwidth]{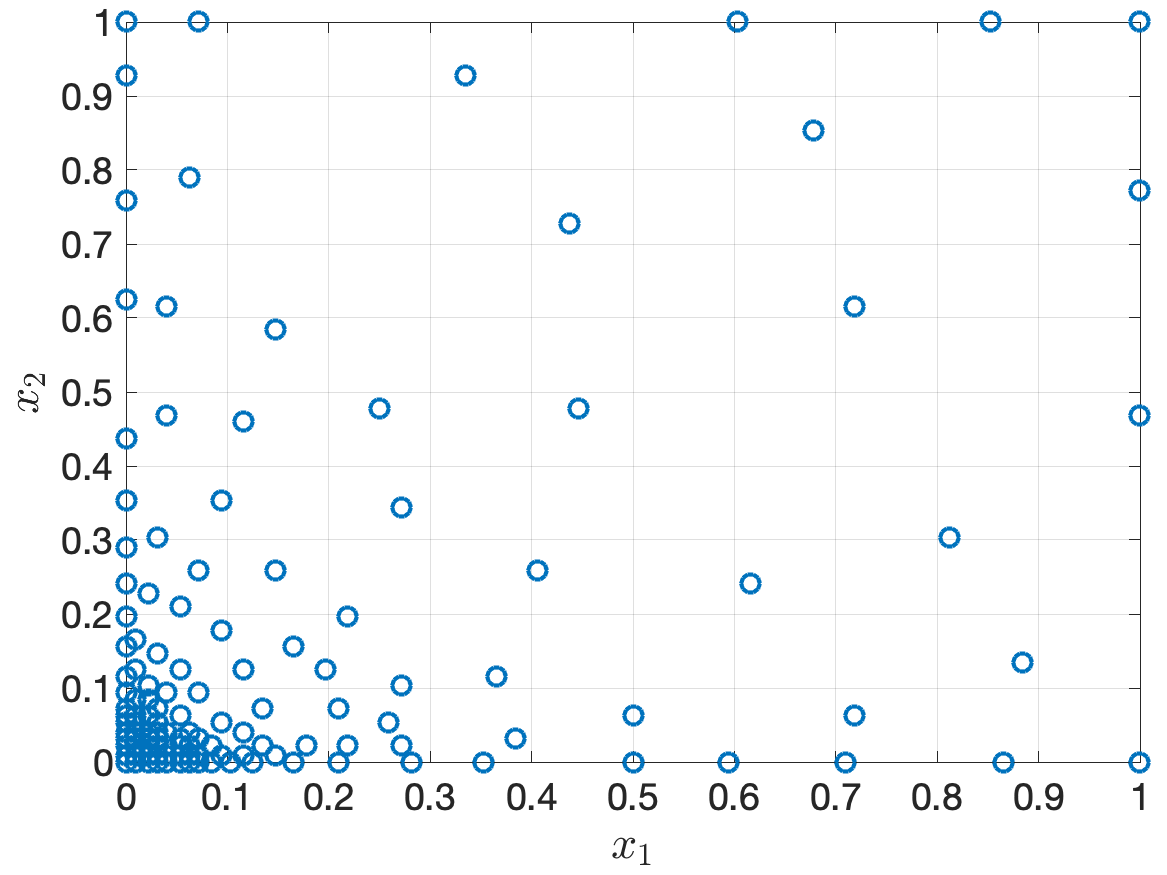}
		\caption{FOEIM-I.}
	\end{subfigure}
	\hfill
	\begin{subfigure}[b]{0.49\textwidth}
		\centering
		\includegraphics[width=\textwidth]{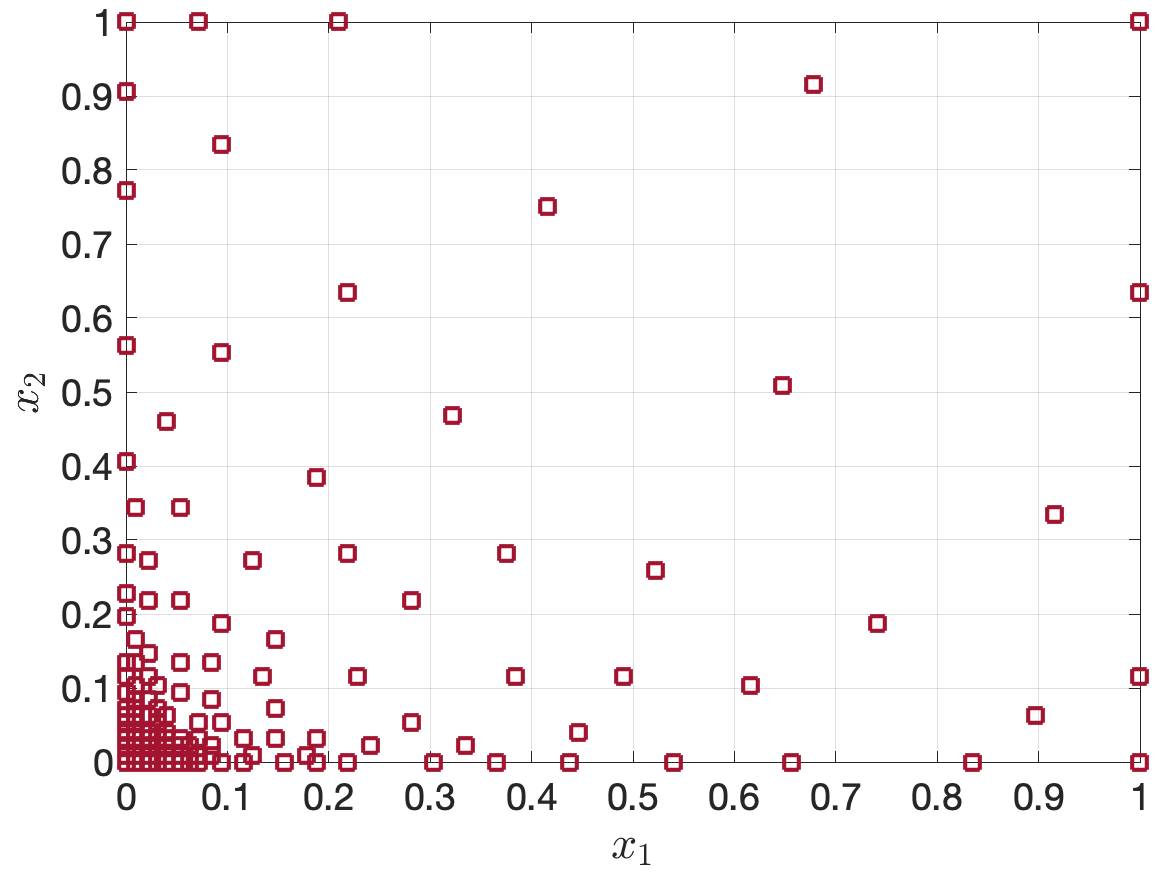}
		\caption{FOEIM-II.}
	\end{subfigure}
	\caption{Distribution of interpolation points in the phsyical domain for FOEIM-I and FOEIM-II for $M = 2N = 128$.}
	\label{fig1}
\end{figure}
 
\revise{We now introduce a uniform grid of size $N_{\rm
  Test} = 30 \times 30$ as a parameter test sample $S^g_{\rm Test}$, and define 
\begin{equation}
\varepsilon_{M}^{\max} = \max_{\bm \mu \in S^g_{\rm Test}}    \|g(u(\bm x, \mu), \bm \mu) - g_M(\bm x, \bm \mu)\|_{L^\infty(\Omega)}
\end{equation}     
as the maximum interpolation error.  We display in Figure~\ref{fig2}  $\varepsilon_{M}^{\max}$ and $\Lambda_M$ as a function of $M$.  We observe that $\varepsilon_{M}^{\max}$ converge rapidly with $M$ while the Lebesgue constant grows slowly with $M$. These results are expected: although $g(u,\bm \mu)$ varies rapidly as $\bm \mu$ approaches $0$ and $\bm x$ approaches 0, $g(u,\bm \mu)$ is nevertheless quite smooth in the prescribed parameter domain $\mathcal{D}$. We see from Figure~\ref{fig2}(a) that  FOEIM-II yields considerably smaller errors than EIM for $M = N = 64$, which can be attributed to the use of partial derivatives. We also observe from  Figure~\ref{fig2}(c)-(e) that FOEIM-I and FOEIM-II yield similar errors for $M=2N$ and $M=3N$, while FOERM-II tends to yield smaller errors than FOERM-I. These results are also shown in Table \ref{gausstab1} and Table \ref{gausstab2}. We see from Table \ref{gausstab1} that increasing $M = N$ to $M = 2N$ and $M = 3N$ reduces the maximum interpolation error by several orders of magnitudes for FOEIM-I and FOEIM-II. We observe from Table \ref{gausstab2} that the maximum regression error drops sightly as we increase $M = N$ to $M=2N$ for FOERM-I and FOERM-II. However, increasing $M=2N$ to $M=3N$ does not really improve the regression error. These results imply that empirical interpolation performs better than empirical regression for the same number of interpolation points.}

\begin{figure}[hthbp]
	\centering
	\begin{subfigure}[b]{0.49\textwidth}
		\centering
		\includegraphics[width=\textwidth]{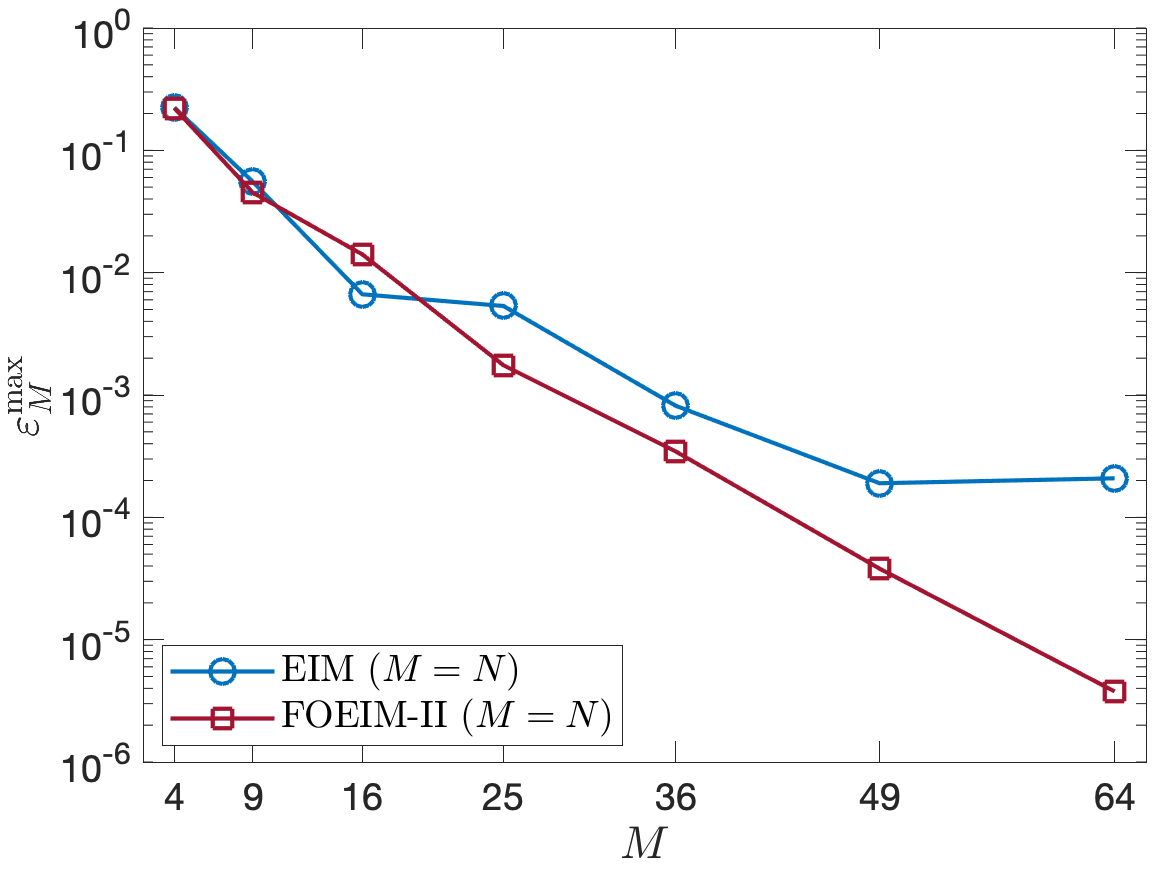}
		\caption{Maximum interpolation error.}
	\end{subfigure}
	\hfill
	\begin{subfigure}[b]{0.49\textwidth}
		\centering
		\includegraphics[width=\textwidth]{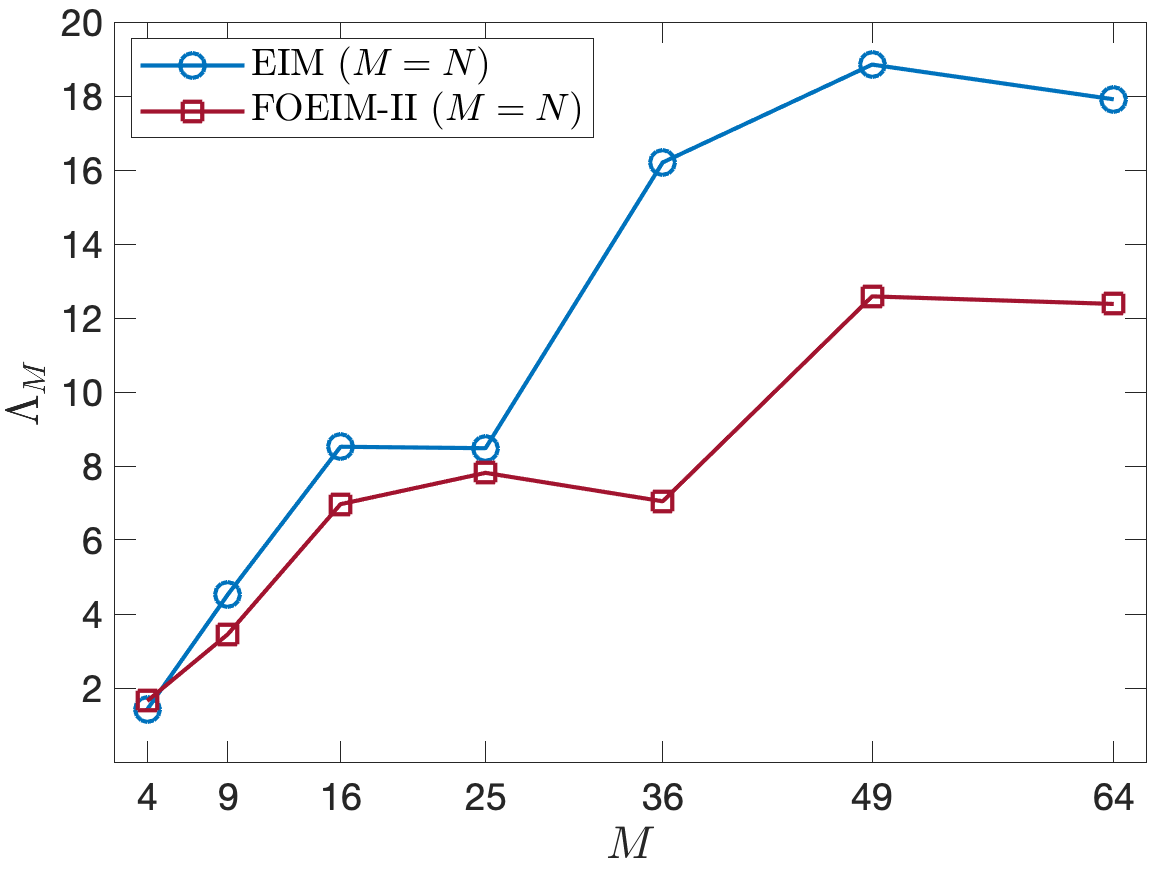}
		\caption{Lebesgue constant.}
	\end{subfigure}
        \begin{subfigure}[b]{0.49\textwidth}
		\centering
		\includegraphics[width=\textwidth]{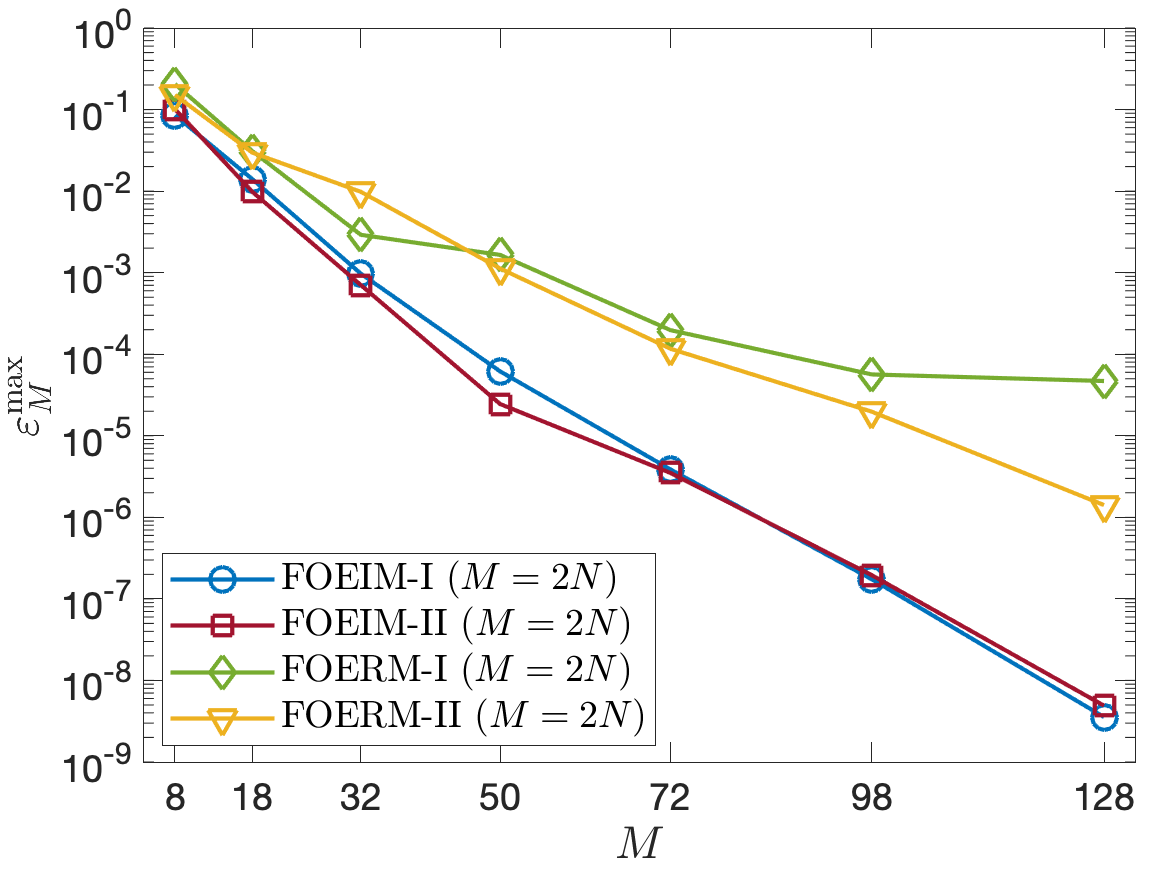}
		\caption{Maximum interpolation error.}
	\end{subfigure}
	\hfill
	\begin{subfigure}[b]{0.49\textwidth}
		\centering
		\includegraphics[width=\textwidth]{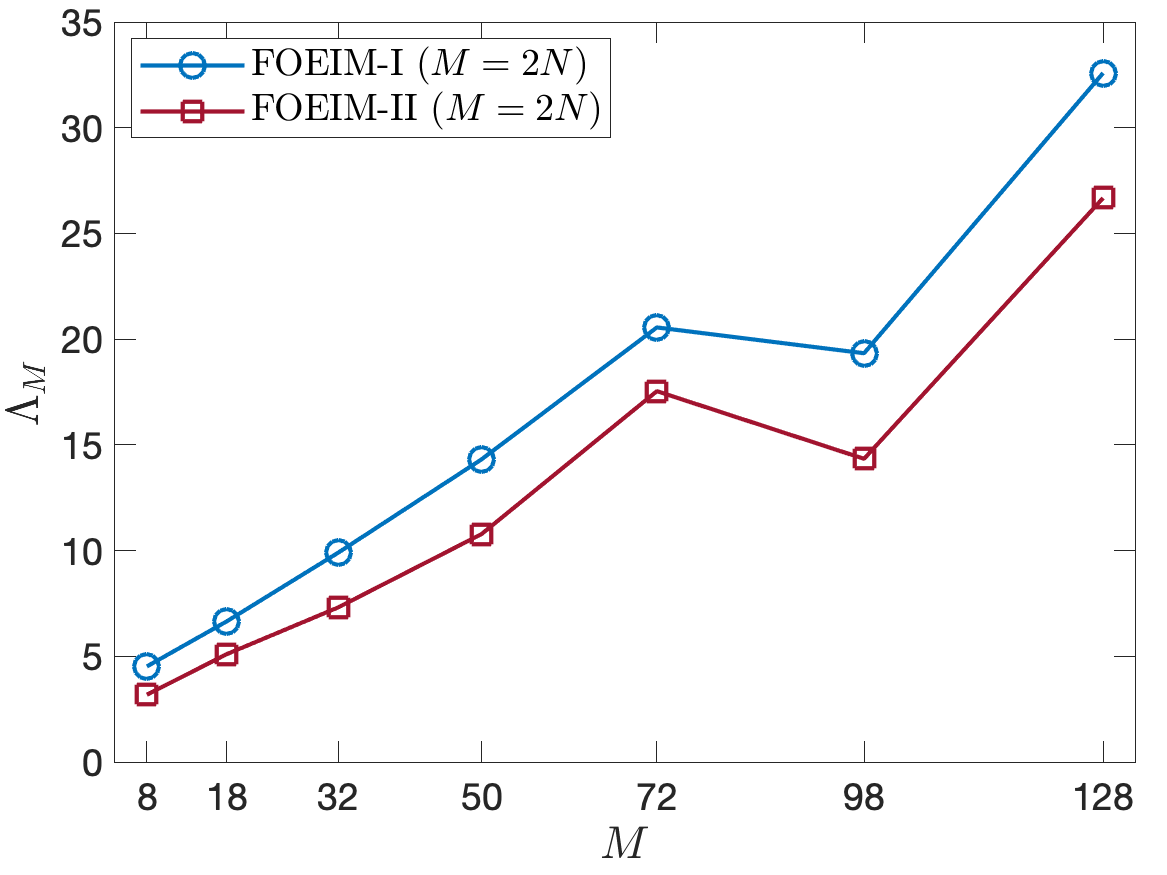}
		\caption{Lebesgue constant.}
	\end{subfigure}
        \begin{subfigure}[b]{0.49\textwidth}
		\centering
		\includegraphics[width=\textwidth]{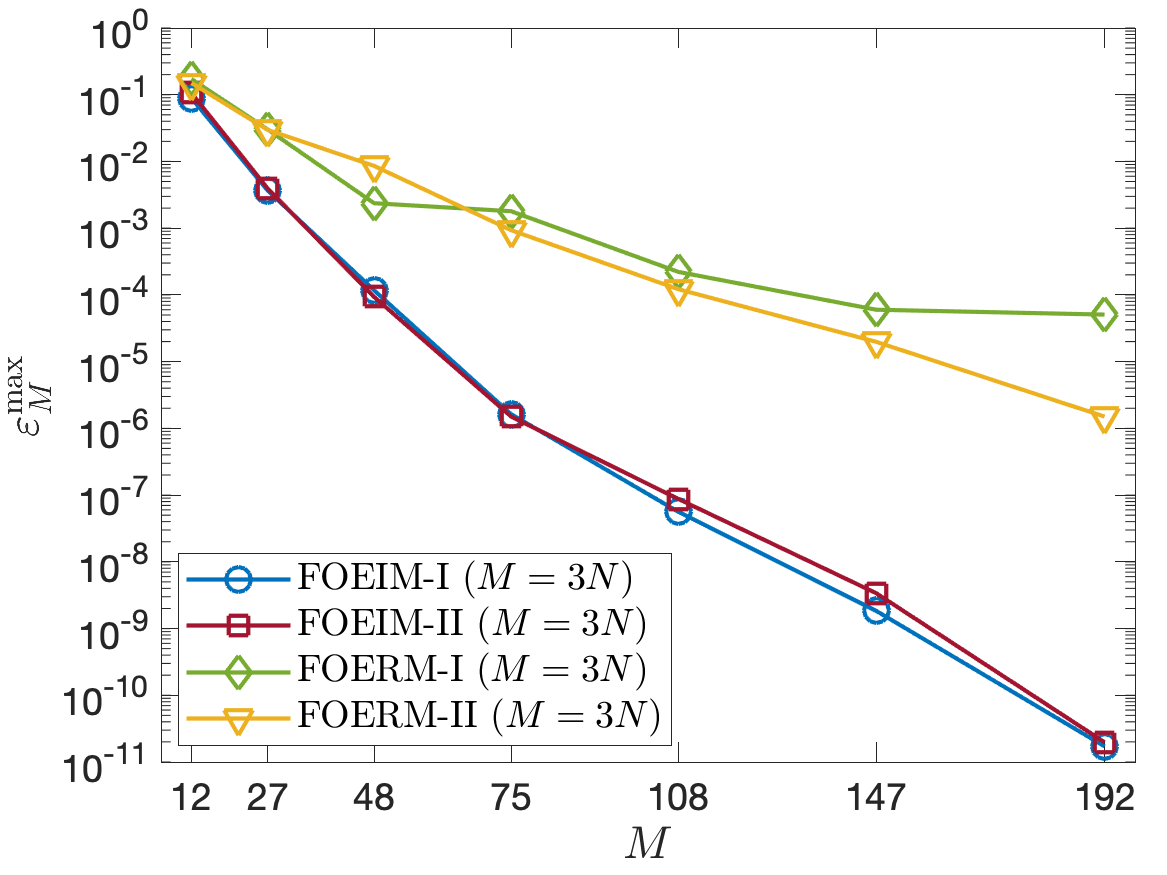}
		\caption{Maximum interpolation error.}
	\end{subfigure}
	\hfill
	\begin{subfigure}[b]{0.49\textwidth}
		\centering
		\includegraphics[width=\textwidth]{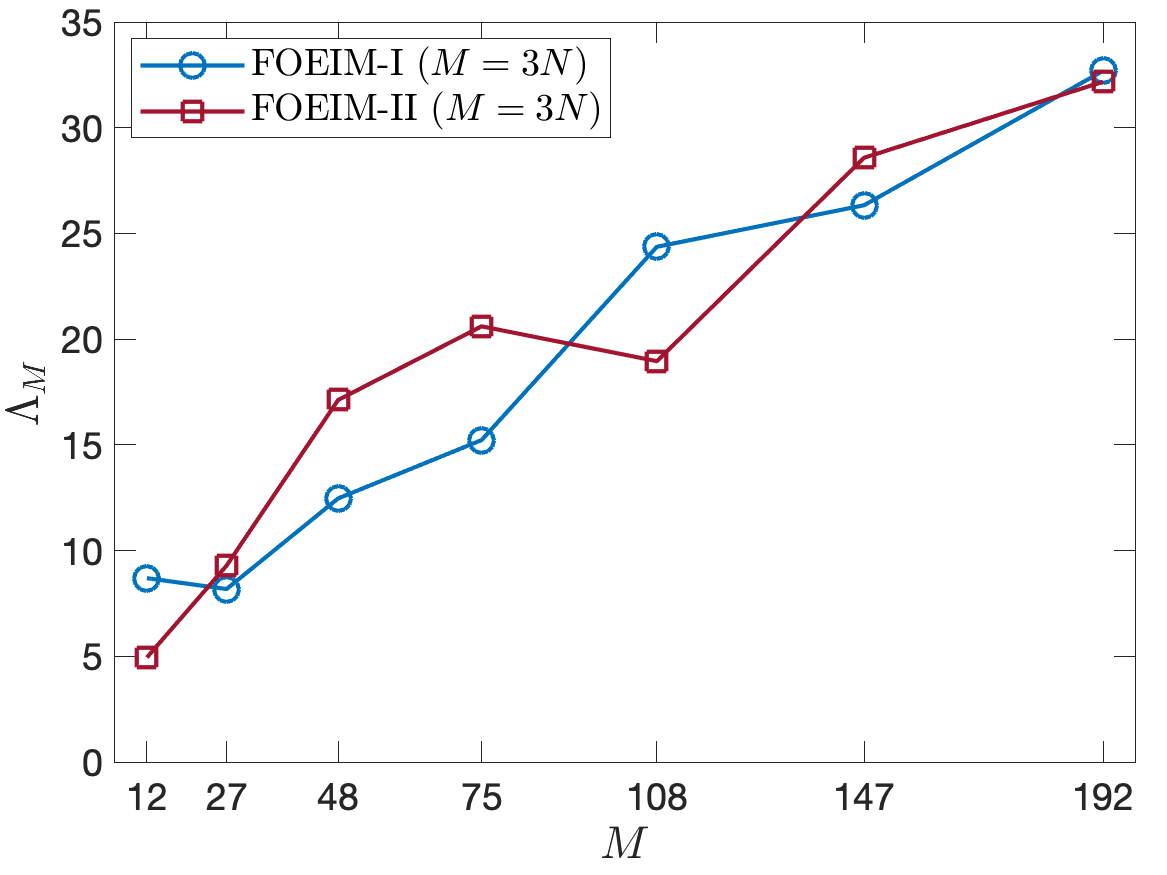}
		\caption{Lebesgue constant.}
	\end{subfigure}
	\caption{Convergence of the maximum interpolation error and the Lesbegue constant as a function of $M$ for  EIM, FOEIM-I, FOEIM-II, FOERM-I, FOERM-II.}
	\label{fig2}
\end{figure}
 
\begin{table}[h]
\centering
\small
	\begin{tabular}{|c||c|c|c||c|c|c|}
		\cline{1-7}
  & FOEIM-I  & FOEIM-I & FOEIM-I & FOEIM-II  & FOEIM-II & FOEIM-II \\  
   $N$ & $M=N$ &  $M=2N$ & $M=3N$ & $M=N$ & $M=2N$ & $M=3N$ \\
		\cline{1-7}
4  &  2.24\mbox{e-}1  &  8.50\mbox{e-}2  &  8.90\mbox{e-}2  &  2.21\mbox{e-}1  &  1.01\mbox{e-}1  &  1.06\mbox{e-}1  \\  
 9  &  5.55\mbox{e-}2  &  1.40\mbox{e-}2  &  3.69\mbox{e-}3  &  4.51\mbox{e-}2  &  9.79\mbox{e-}3  &  3.95\mbox{e-}3  \\  
 16  &  6.65\mbox{e-}3  &  9.73\mbox{e-}4  &  1.17\mbox{e-}4  &  1.41\mbox{e-}2  &  7.05\mbox{e-}4  &  9.34\mbox{e-}5  \\  
 25  &  5.34\mbox{e-}3  &  6.16\mbox{e-}5  &  1.63\mbox{e-}6  &  1.75\mbox{e-}3  &  2.45\mbox{e-}5  &  1.51\mbox{e-}6  \\  
 36  &  8.16\mbox{e-}4  &  3.86\mbox{e-}6  &  5.60\mbox{e-}8  &  3.45\mbox{e-}4  &  3.50\mbox{e-}6  &  8.73\mbox{e-}8  \\  
 49  &  1.90\mbox{e-}4  &  1.74\mbox{e-}7  &  1.84\mbox{e-}9  &  3.81\mbox{e-}5  &  1.95\mbox{e-}7  &  3.38\mbox{e-}9  \\  
 64  &  2.08\mbox{e-}4  &  3.56\mbox{e-}9  &  1.72\mbox{e-}11  &  3.77\mbox{e-}6  &  4.91\mbox{e-}9  &  1.97\mbox{e-}11  \\   
		\hline
	\end{tabular}
	\caption{Maximum interpolation error $\varepsilon_M^{\rm max}$ as a function of $N$ and $M$ for FOEIM-I and FOEIM-II. Note that FOEIM-I is the same as EIM for $M=N$.} 
	\label{gausstab1}
\end{table}

\begin{table}[htbp]
\centering
\small
	\begin{tabular}{|c||c|c|c||c|c|c|}
		\cline{1-7}
  & FOERM-I  & FOERM-I & FOERM-I & FOERM-II  & FOERM-II & FOERM-II \\  
   $N$ & $M=N$ &  $M=2N$ & $M=3N$ & $M=N$ & $M=2N$ & $M=3N$ \\
		\cline{1-7}
4  &  2.24\mbox{e-}1  &  2.05\mbox{e-}1  &  1.72\mbox{e-}1  &  2.21\mbox{e-}1  &  1.51\mbox{e-}1  &  1.50\mbox{e-}1  \\  
 9  &  5.55\mbox{e-}2  &  3.09\mbox{e-}2  &  3.02\mbox{e-}2  &  4.51\mbox{e-}2  &  2.96\mbox{e-}2  &  2.98\mbox{e-}2  \\  
 16  &  6.65\mbox{e-}3  &  2.91\mbox{e-}3  &  2.37\mbox{e-}3  &  1.41\mbox{e-}2  &  9.81\mbox{e-}3  &  8.58\mbox{e-}3  \\  
 25  &  5.34\mbox{e-}3  &  1.65\mbox{e-}3  &  1.80\mbox{e-}3  &  1.75\mbox{e-}3  &  1.12\mbox{e-}3  &  9.34\mbox{e-}4  \\  
 36  &  8.16\mbox{e-}4  &  1.97\mbox{e-}4  &  2.21\mbox{e-}4  &  3.45\mbox{e-}4  &  1.17\mbox{e-}4  &  1.22\mbox{e-}4  \\  
 49  &  1.90\mbox{e-}4  &  5.65\mbox{e-}5  &  6.00\mbox{e-}5  &  3.81\mbox{e-}5  &  1.97\mbox{e-}5  &  1.98\mbox{e-}5  \\  
 64  &  2.08\mbox{e-}4  &  4.69\mbox{e-}5  &  5.07\mbox{e-}5  &  3.77\mbox{e-}6  &  1.41\mbox{e-}6  &  1.50\mbox{e-}6  \\  
		\hline
	\end{tabular}
	\caption{Maximum regression error $\varepsilon_M^{\rm max}$ as a function of $N$ and $M$ for FOERM-I and FOERM-II. Note that FOERM-I is the same as EIM for $M=N$.} 
	\label{gausstab2}
\end{table}
   
\section{Nonlinear elliptic equations}

\subsection{A model problem}
\label{section3.1}

We consider the following parametrized nonlinear elliptic PDE 
\begin{equation}
-\nabla^2u + \mu_1 \exp(\sin(\mu_2 u)) = 100 \sin(2\pi x_{1})\cos(2 \pi x_{2}), \quad \mbox{in } \Omega, 
\end{equation} 
 with homogeneous Dirichlet condition on the boundary $\partial \Omega$, where $\Omega = (0,1)^2$ and $\bm \mu  \in {\cal D} \equiv [1, 10]^2$.  The output of interest is the average of the field variable over the
physical domain. The weak formulation is then stated as: given $\bm \mu \in
{\cal D}$, find $s(\bm \mu) = \int_\Omega u(\bm \mu)$, where $u(\bm \mu) \in X \subset H_0^1(\Omega) \equiv \{v \in
H^1(\Omega) \mbox{ } | \mbox{ } v|_{\partial \Omega} = 0\}$ is the solution
of
\begin{equation}
a(u(\bm \mu), v) + \mu_1 \int_\Omega g(u(\bm \mu), \bm \mu) \, v = f(v), \quad \forall  v
\in X \ , 
\label{eq:7-6}
\end{equation}
where
\begin{equation}
a(w, v) = \int_\Omega \nabla w \cdot \nabla v, \quad f(v) = 100
\int_\Omega \sin(2\pi x_{1}) \, \cos(2 \pi x_{2}) \, v,  
\label{eq:7-6a}
\end{equation}
and
\begin{equation}
g(u(\bm \mu), \bm \mu) = \exp(\sin(\mu_2 u(\bm \mu))).  
\label{eq:7-6b}
\end{equation}
The finite element (FE) approximation space is $X = \{v \in H_0^1(\Omega) : v|_K \in \mathcal{P}^3(T), \  \forall T \in \mathcal{T}_h \}$, where $\mathcal{P}^3(T)$ is a space of polynomials of degree $3$ on an element $T \in \mathcal{T}_h$ and $\mathcal{T}_h$ is a finite element grid of $32 \times 32$ quadrilaterals. The dimension of the FE space is $\mathcal{N} = 9409$.




\subsection{Reduced basis approximation}

We introduce the parameter sample set $S_N = \{\bm \mu_1 \in {\cal D} ,\cdots, \bm \mu_N \in {\cal D}\}$, and associated RB space $W_N^u = \mbox{span} \{\zeta_j \equiv  u(\bm \mu_j), \ 1 \leq j \leq N\}$, where
$u(\bm \mu_j)$ is the solution of~(\refeq{eq:7-6}) for $\bm \mu = \bm \mu_j$. We then orthonormalize the $\zeta_j, 1 \leq j \leq N,$ with respect to $(\cdot,\cdot)_X$ so that $(\zeta_i,\zeta_j)_X = \delta_{ij}, 1 \leq i,j \leq N$. The RB approximation is obtained by a standard Galerkin projection: given $\bm \mu \in {\cal D}$, we evaluate $s_{N}(\bm \mu) = \int_{\Omega} u_{N}(\bm\mu)$, where $u_{N}(\bm\mu) \in W_N^u$ is the solution of
\begin{equation}
a(u_{N}(\bm\mu),v) + \mu_1 \int_{\Omega} g(u_{N}(\bm \mu), \bm \mu) v = f(v), \quad
\forall  v \in W_N^u  . 
\label{eq:7-7}
\end{equation} 
We now express $u_N(\bm \mu) = \sum_{n=1}^N \alpha_{N,n} (\bm \mu) \zeta_n$ and choose
test functions $v = \zeta_j, \ 1 \leq j \leq N$, in~(\refeq{eq:7-7}), we obtain the nonlinear algebraic system
\begin{equation}
\bm A_{N} \bm \alpha_N(\bm \mu) + \mu_1\bm G_N(\bm \alpha_N(\bm \mu), \bm \mu) = \bm F_N
\label{eq:7-8}
\end{equation} 
where, for $1 \le n, j \le N$, we have
\begin{equation}
A_{N, jn} = a(\zeta_j,\zeta_n), \quad F_{N,j} = f(\zeta_j), 
\label{eq:7-9}
\end{equation} 
and 
\begin{equation}
G_{N, j}(\bm \alpha_N(\bm \mu), \bm \mu) = \int_{\Omega} g(u_{N}(\bm \mu), \bm \mu) \zeta_j  .
\label{eq:7-10}
\end{equation} 
Both $\bm A_N$ and $\bm F_N$ can be pre-computed in the offline stage owing to their parameter independence, whereas $\bm G_N$ can not be pre-computed due to the nonlinearity of the function $g$. 

We use Newton method to linearize (\ref{eq:7-8}) at a current iterate $\bar{\bm \alpha}_N(\bm \mu)$ to arrive at the following linear system 
\begin{equation}
\left( \bm A_N \ + \mu_1\bm D_N(\bar{\bm \alpha}_N(\bm \mu), \bm \mu)  \right) \delta \bm \alpha_N(\bm \mu) = \bm F_N - \bm A_N \bar{\bm \alpha}_N(\bm \mu) - \mu_1\bm G_N(\bar{\bm \alpha}_N(\bm \mu), \bm \mu) 
\label{eq:7-11}
\end{equation} 
where, for $1 \le n, j \le N$, we have
\begin{equation}
D_{N, j n}(\bar{\bm \alpha}_N(\bm \mu),\bm  \mu) = \int_{\Omega} g'_u(\bar{u}_{N}(\bm \mu),\bm \mu) \zeta_n \zeta_j \ .
\label{eq:7-12}
\end{equation} 
Here $g'_u$ is the partial derivative of $g$ with respect to the first argument. Unfortunately, the matrix $\bm D_N$ can not be pre-computed in the offline stage due to its dependency on $g'_u(\bar{u}_{N}(\bm \mu),\bm \mu)$. Consequently, although the linear system (\ref{eq:7-11}) is small, it is computationally expensive due to the $\mathcal{N}$-dependent complexity of forming both $\bm G_N$ and $\bm D_N$. As a result, the RB approximation does not offer a significant speedup over the FE approximation.

\subsection{Reduced basis approximation via empirical interpolation}

To recover online ${\cal N}$-independence, we simply replace $g(u_N(\bm \mu),\bm \mu)$ in~(\ref{eq:7-7}) with the approximation $g_M(\bm x, \bm \mu) = \sum_{m=1}^M \beta_{M, \, m}(\bm \mu) \psi_m( \bm x)$ based upon the empirical interpolation approach. This requires us to pre-compute the interpolation point set $T_M = \{\widehat{\bm x}_m\}_{m=1}^M$ and the interpolating basis set $W_M^g = \mbox{span} \{\psi_m(\bm x), 1 \le m \le M\}$ for the nonlinear parametrized function $g(u(\bm \mu), \bm \mu)$ in the offline stage. For the original EIM described in Section \ref{section2.2}, we use the sample set $S_N$ and the basis set $W_N^u$ to construct $T_M$ and $W_M^g$. In this case, $M$ can not exceed $N$. For the first-order EIM described in Section \ref{section2.3}, we need to construct the Taylor  space $W_K^{\partial g}$ in (\ref{taylorspace}) using $S_N, W_N^u$ and first-order partial derivatives of $g(\zeta, \bm \mu)$ for $(\zeta, \bm \mu) = (\zeta_n, \bm \mu_n), 1 \le n \le N$. Henceforth, $M$ can be chosen many times greater than $N$. 

In practice, the empirical interpolation is carried out over a set of discretization points on the physical domain $\Omega$. There are two different options for the choice of the discretization points: ({\em i}) nodal points on all elements in the mesh and ({\em ii})  quadrature points on all elements in the mesh. We observe through numerical experiments that the  quadrature points yield more accurate approximation than the nodal points. Hence, the interpolation points are selected from the set of quadrature points on  elements in the mesh. We are going to describe the RB approximation via  empirical interpolation, which turns out to be exactly the same procedure for both the original EIM and the first-order EIM. 

In the online stage, our RB approximation via empirical interpolation is stated as: Given $\bm \mu \in {\cal D}$,  we evaluate $s_{N,M}(\bm \mu) = \int_{\Omega} u_{N,M}(\bm\mu)$, where $u_{N,M}(\bm\mu) \in W_N^u$ is the solution of 
\begin{equation}
a(u_{N,M}(\bm\mu),v) + \mu_1 \sum_{m=1}^M \beta_{M, \, m}(\bm \mu)  \int_{\Omega} \psi_m v = f(v), \quad
\forall  v \in W_N^u  ,
\label{eq:7-7b}
\end{equation} 
where the coefficients $\beta_{M, \, m}(\bm \mu)$ are computed from the following linear system
\begin{equation}
\sum_{m=1}^M  \psi_m(\widehat{\bm x}_k)   \beta_{M,m}(\bm \mu) = g(u_{N,M}(\widehat{\bm x}_k, \bm \mu), \bm \mu), \quad 1 \le k \le M .
\end{equation}
We now express $u_{N,M}(\bm \mu) = \sum_{n=1}^N \alpha_{N,M,n} (\bm \mu) \zeta_n$ and choose
test functions $v = \zeta_j, \ 1 \leq j \leq N$, in~(\refeq{eq:7-7b}), we obtain the nonlinear algebraic system
\begin{equation}
\bm A_N \bm \alpha_{N,M}(\bm \mu) + \mu_1 \bm E_{N,M} \bm g_M (\bm \alpha_{N,M}(\bm \mu), \bm \mu) = \bm F_N, 
\label{eq:7-8b}
\end{equation} 
where $\bm E_{N,M} = \bm C_{N,M} \bm B_M^{-1}$, for $1 \le j \le N$ and $1 \le m, k \le M$, we have
\begin{equation}
C_{N, M, j m} = \int_{\Omega} \psi_m \zeta_j, \quad B_{M, km} =  \psi_m(\widehat{\bm x}_k), 
\label{eq:7-10b}
\end{equation} 
and
\begin{equation}
g_{M, k} (\bm \alpha_{N,M}(\bm \mu),\bm \mu) = g\left(  \sum_{n=1}^N \alpha_{N,M,n} (\bm \mu) \zeta_n(\widehat{\bm x}_k), \bm \mu\right) .
\label{eq:7-10c}
\end{equation} 
The vector $\bm F_N$ and  matrices $\bm A_N, \bm E_{N,M}$ can be pre-computed in the offline stage since they are independent of $\bm \mu$. \revise{We form and store the Jacobian matrix of the FOM to  compute $\bm A_N$ using Jacobian-vector products during the offline stage. For large-scale problems, forming and storing the Jacobian matrix can be costly in terms of both memory storage and computational time. One possible remedy to reduce the memory storage and computational time is to use a Jacobian-free technique to compute Jacobian-vector products via finite difference. However,  finite difference does not yield the exact Jacobian-vector products, a more rigorous method is to use automatic differentiation to compute the Jacobian-vector products exactly  \cite{Vila-Perez2022}.}
 
We use Newton method to linearize (\ref{eq:7-8b}) at a given iterate $\bar{\bm \alpha}_{N,M}(\bm \mu)$ to arrive at the following linear system 
\begin{multline}    
\left( \bm A_N \ + \mu_1\bm E_{N,M} \bm H_{M,N}(\bar{\bm \alpha}_{N,M}(\bm \mu), \bm \mu)  \right) \delta \bm \alpha_N(\bm \mu) =  \\ \bm F_N - \bm A_N \bar{\bm \alpha}_{N,M}(\bm \mu) - \mu_1 \bm E_{N,M} \bm g_M (\bar{\bm \alpha}_{N,M}(\bm \mu), \bm \mu) 
\label{eq:7-11b}
\end{multline}
where, for $1 \le i \le N$ and $1 \le k \le M$, we have
\begin{equation}
H_{M, N, k i}(\bar{\bm \alpha}_N(\bm \mu), \bm \mu) = g'_u\left(  \sum_{n=1}^N \bar{\alpha}_{N,M,n} (\bm \mu) \zeta_n(\widehat{\bm x}_k), \bm \mu\right) \zeta_i(\widehat{\bm x}_k) .
\label{eq:7-12b}
\end{equation} 
\revise{In the online stage,  at each Newton iteration, we compute $\bm g_M (\bar{\bm \alpha}_{N,M}(\bm \mu), \bm \mu)$ from (\ref{eq:7-10c}) and $\bm H_{M,N}(\bar{\bm \alpha}_{N,M}(\bm \mu), \bm \mu)$ from (\ref{eq:7-12b}), and solve the linear system (\ref{eq:7-11b}). The conline omplexity of evaluating $\bm g_M (\bar{\bm \alpha}_{N,M}(\bm \mu), \bm \mu)$ is $O(MN)$, while that of evaluating $\bm H_{M,N}(\bar{\bm \alpha}_{N,M}(\bm \mu), \bm \mu)$ is $O(MN^2)$. Hence, the overall complexity of solving the linear system (\ref{eq:7-11b}) per Newton iteration is $O(MN^2 + N^3)$. Since $M \ge N$, the computational complexity per Newton iteration becomes $O(MN^2)$. As a result, the RB approximation via empirical interpolation can be  orders of magnitude  faster than the RB approximation described earlier.}

It is important to note that the computational complexity scales linearly with $M$.  Hence, it can be advantageous to increase $M$ to improve the accuracy of the RB approximation via empirical interpolation. As $M$ increases, we expect that the RB approximation via empirical interpolation will converge to the standard RB approximation. To assess the accuracy of the output approximation, we define the following output errors  
\begin{equation}
\epsilon^s_N(\bm \mu) = |s(\bm \mu) - s_N(\bm \mu)|, \quad \epsilon^s_{N,M}(\bm \mu) = |s(\bm \mu) - s_{N,M}(\bm \mu)| 
\end{equation}
for the standard RB approximation and the RB approximation via empirical interpolation, respectively. Similarly, we introduce the following errors to assess the approximation of the solution
\begin{equation}
\epsilon^u_N(\bm \mu) = \|u(\bm \mu) - u_N(\bm \mu)\|_{X}, \quad \epsilon^u_{N,M}(\bm \mu) = \|u(\bm \mu) - u_{N,M}(\bm \mu)\|_X . 
\end{equation}
In general, we expect $\epsilon^s_{N,M}(\bm \mu) \ge\epsilon^s_N(\bm \mu)$ and $\epsilon^u_{N,M}(\bm \mu) \ge\epsilon^u_N(\bm \mu)$. The effectivities as defined below
\begin{equation}
\eta_{N,M}^s(\bm \mu) = \frac{\epsilon^s_{N,M}(\bm \mu)}{\epsilon^s_N(\bm \mu)}, \quad \eta_{N,M}^u(\bm \mu) = \frac{\epsilon^u_{N,M}(\bm \mu)}{\epsilon^u_N(\bm \mu)}
\end{equation}
will measure the accuracy of the RB approximation via empirical interpolation relative to the standard RB approximation. If the effectivities are close to unity, the RB approximation via empirical interpolation can be considered as accurate as the standard RB approximation. However, if they are much greater than unity, the RB approximation via empirical interpolation will be not accurate enough.

\subsection{Numerical results}
\label{section3.4}

We present numerical results for the model problem of Section \ref{section3.1}. \revise{The first-order EIM results reported herein are obtained by using the FOEIM Algoeirhm I.} We introduce a uniform grid of size $N_{\rm
  Test} = 30 \times 30$ as a parameter test sample $S^g_{\rm Test}$, and define 
\begin{equation}
\bar{\epsilon}_{N}^s = \frac{\sum_{\bm \mu \in S^g_{\rm Test}}\epsilon_N^s (\bm \mu)}{ \sum_{\bm \mu \in S^g_{\rm Test}} |s (\bm \mu)|} , \quad  \bar{\epsilon}_{N}^u =  \frac{\sum_{\bm \mu \in S^g_{\rm Test}} \epsilon_N^u (\bm \mu)}{ \sum_{\bm \mu \in S^g_{\rm Test}} \|u(\bm \mu)\|_X }.  
\end{equation}  
The quantities $\bar{\epsilon}_{N,M}^s$ and $\bar{\epsilon}_{N,M}^u$ are similarly defined via $\epsilon_{N,M}^s (\bm \mu)$ and $\epsilon_{N,M}^u (\bm \mu)$, respectively.  We present in Figure \ref{fig3} $\bar{\epsilon}_{N}^s$, $\bar{\epsilon}_{N,M}^s$,  $\bar{\epsilon}_{N}^u$, and $\bar{\epsilon}_{N,M}^u$ as a function of $N$.  As $M$ increases, $\bar{\epsilon}_{N,M}^s$ (respectively, $\bar{\epsilon}_{N,M}^u$) converges to $\bar{\epsilon}_{N}^s$ (respectively, $\bar{\epsilon}_{N}^u$). The RB approximation based on the first-order EIM with $M=8N$ is almost as accurate as the standard RB approximation. However, the RB approximation based on the original EIM is significantly less accurate than the standard RB approximation.

\begin{figure}[hthbp]
	\centering
	\begin{subfigure}[b]{0.49\textwidth}
		\centering
		\includegraphics[width=\textwidth]{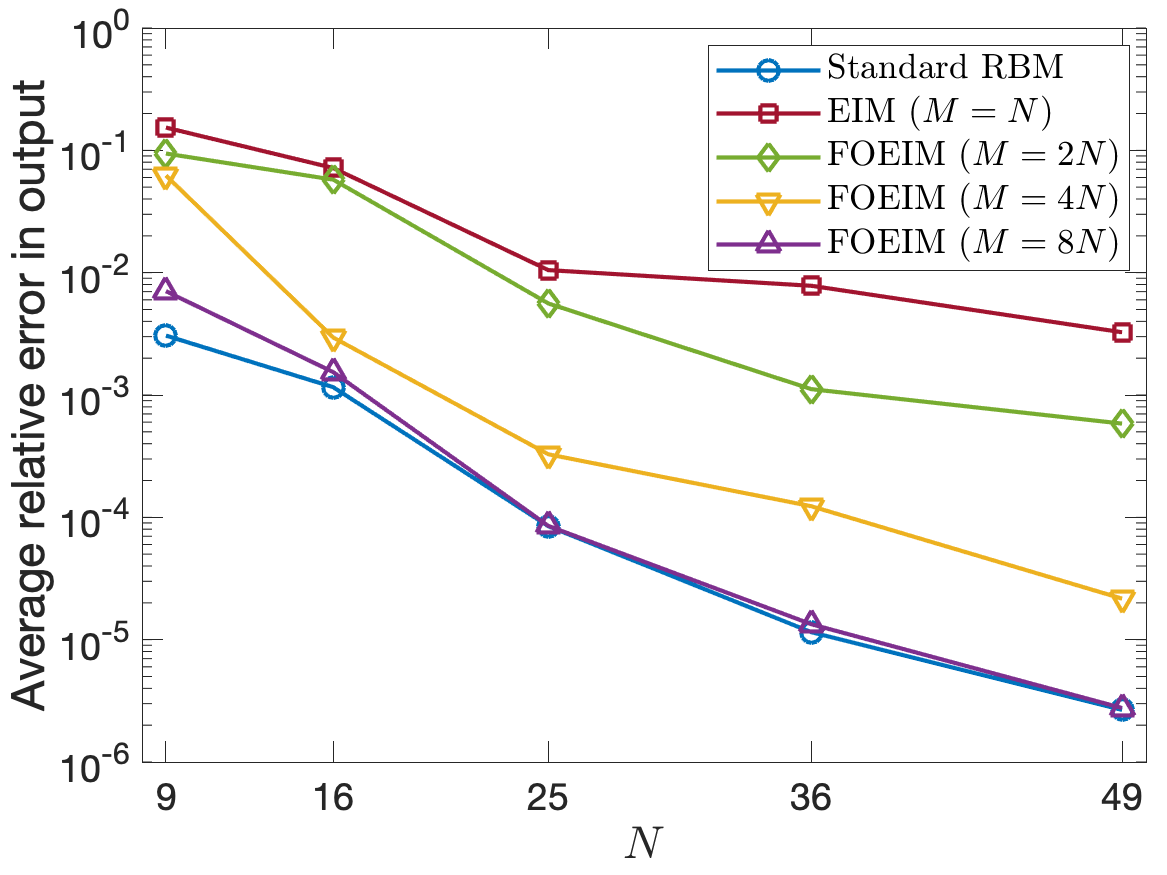}
		\caption{$\bar{\epsilon}_{N}^s$ and $\bar{\epsilon}_{N,M}^s$}
	\end{subfigure}
	\hfill
	\begin{subfigure}[b]{0.49\textwidth}
		\centering
		\includegraphics[width=\textwidth]{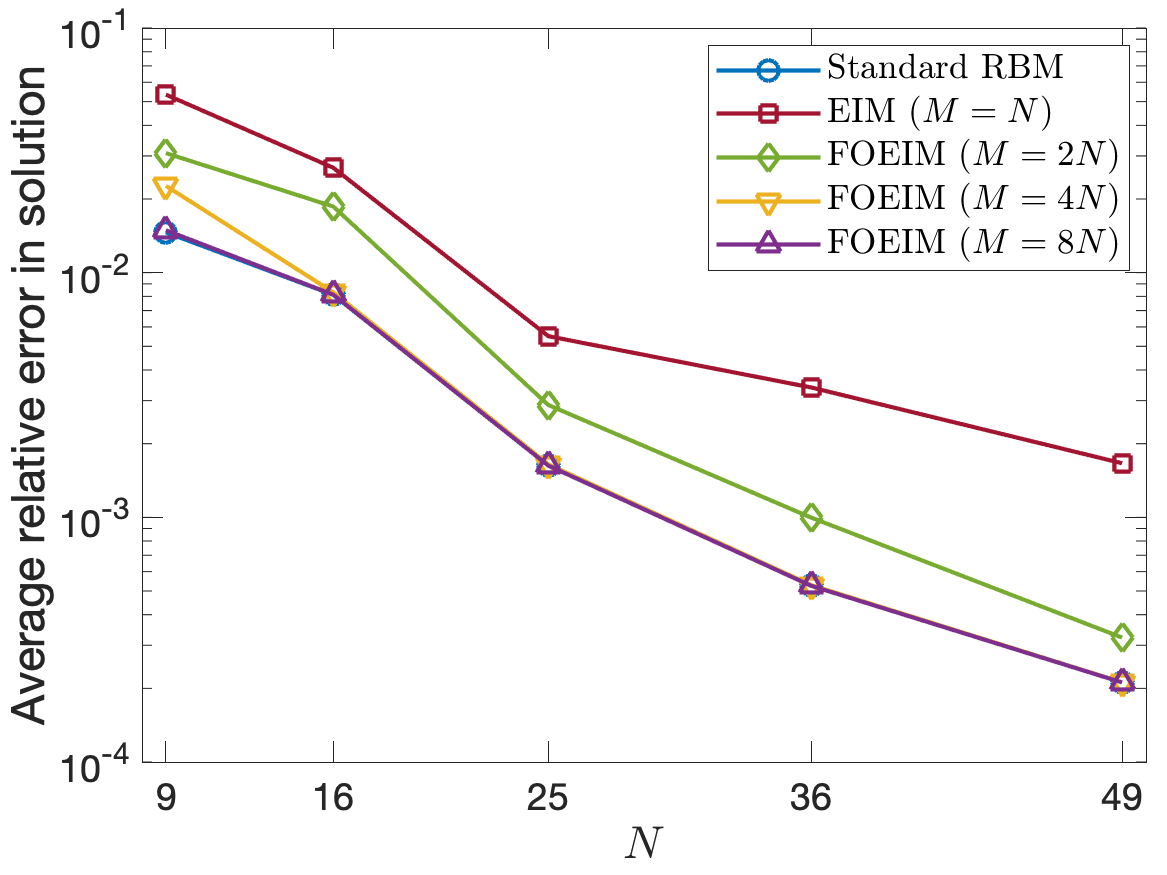}
		\caption{$\bar{\epsilon}_{N}^u$ and $\bar{\epsilon}_{N,M}^u$}
	\end{subfigure}
	\caption{Convergence of the average relative error in output (a) and solution (b) for the model problem of Section \ref{section3.1}.}
	\label{fig3}
\end{figure}
 
To compare the original EIM and the first-order EIM, we  introduce 
\begin{equation}
\bar{\eta}_{N,M}^s = \frac{1}{N_{\rm Test}}\sum_{\bm \mu \in S^g_{\rm Test}} \eta_{N,M}^s (\bm \mu), \quad   \bar{\eta}_{N,M}^u = \frac{1}{N_{\rm Test}} \sum_{\bm \mu \in S^g_{\rm Test}} \eta_{N,M}^u (\bm \mu) .
\end{equation}
We display in Table~\ref{tab1} 
$\bar{\eta}_{N,M}^s$ and $\bar{\eta}_{N,M}^u$ as a function of $N$. We observe that the average effectivities decrease toward unity as $M$ increases. Furthermore, the first-order EIM with $M=8N$ yields significantly smaller output effectivties by several orders of magnitudes than the original EIM for the same dimension $N$. It is interesting to point out that the output effectivties are considerably larger than the solution effectivities. 

\begin{table}[htbp]
\centering
\small
	\begin{tabular}{|c|cc|cc|cc|cc|cc|}
		\cline{1-9}
    &		 
	 \multicolumn{2}{|c|}{$M=N$} & \multicolumn{2}{c|}{$M=2N$} & 
		 \multicolumn{2}{c|}{$M=4N$} &
		 \multicolumn{2}{c|}{$M=8N$} \\   
   $N$ & $\bar{\eta}_{N,M}^s$ & $\bar{\eta}_{N,M}^u$ & $\bar{\eta}_{N,M}^s$ & $\bar{\eta}_{N,M}^u$ & $\bar{\eta}_{N,M}^s$ & $\bar{\eta}_{N,M}^u$ & $\bar{\eta}_{N,M}^s$ & $\bar{\eta}_{N,M}^u$ \\
		\cline{1-9}
9  &  193.38  &  4.59  &  102.45  &  2.17  &  26.28  &  1.41  &  5.73  &  1.03  \\  
 16  &  168.25  &  3.03  &  130.03  &  2.07  &  10.81  &  1.04  &  2.34  &  1.00  \\  
 25  &  396.44  &  3.82  &  224.32  &  2.14  &  12.31  &  1.01  &  2.83  &  1.00  \\  
 36  &  1223.7  &  6.87  &  190.39  &  2.06  &  17.51  &  1.01  &  1.95  &  1.00  \\  
 49  &  1310.14  &  7.67  &  294.96  &  1.52  &  17.92  &  1.00  &  1.98  &  1.00  \\  
		\hline
	\end{tabular}
	\caption{Average effectivities for the model problem of Section \ref{section3.1}. The column with $M=N$ corresponds to the original EIM, while the columns with $M >N$ correspond  to the first-order EIM.} 
	\label{tab1}
\end{table}

We present in Table~\ref{tab2} the online
computational times to calculate $s_{N}(\bm \mu)$ and $s_{N,M}(\bm \mu)$ as a function of $N$. The values are normalized with respect to the computational time  of the truth approximation output $s(\bm \mu)$. The computational saving is significant: for an relative accuracy of about 0.0001 ($N = 25$, $M = 200$) in the output, the reduction in online cost is more than a factor of 1000; this is mainly because the matrix assembly of the nonlinear terms for the truth approximation is computationally very expensive. The standard RB approximation has similar computational times as the truth FE approximation, and is between 100 and 1000 times slower than the RB approximation via empirical interpolation. \revise{We notice that using $M = N$ often requires more Newton iterations to converge than using $M > N$ especially when $N$ is relatively small. As a result, the online computational time with $M=N$ is slightly higher than that $M>N$ especially for $N = 9$ and $N=16$.}

\begin{table}[htbp]
\centering
\small
	\begin{tabular}{|c|c|c|c|c|c|c|}
		\cline{1-7}
  & FEM  & RBM & EIM & FOEIM  & FOEIM & FOEIM \\  
   $N$ & $s(\bm \mu)$ &  $s_N(\bm \mu)$ & $M=N$ & $M=2N$ & $M=4N$ & $M=8N$ \\
		\cline{1-7}
9 & 1 &  5.37\mbox{e-}1  &  1.04\mbox{e-}3  &  4.93\mbox{e-}4  &  3.31\mbox{e-}4  &  3.46\mbox{e-}4  \\  
 16  & 1 &  5.44\mbox{e-}1  &  4.67\mbox{e-}4  &  4.59\mbox{e-}4  &  4.51\mbox{e-}4  &  5.73\mbox{e-}4  \\  
 25 & 1  &  5.61\mbox{e-}1  &  5.17\mbox{e-}4  &  5.19\mbox{e-}4  &  5.98\mbox{e-}4  &  9.65\mbox{e-}4  \\  
 36 & 1  &  1.51\mbox{e-}0  &  2.49\mbox{e-}3  &  3.43\mbox{e-}3  &  3.93\mbox{e-}3  &  5.00\mbox{e-}3  \\  
 49  & 1  &  2.07\mbox{e-}0  &  3.48\mbox{e-}3  &  4.15\mbox{e-}3  &  5.09\mbox{e-}3  &  6.77\mbox{e-}3  \\   
		\hline
	\end{tabular}
	\caption{Online computational times (normalized with respect to the time to
  solve for $s(\bm \mu)$) for the model problem of Section \ref{section3.1}.} 
	\label{tab2}
\end{table}

\section{Nonlinear diffusion equations}

\subsection{A model problem}
\label{section4.1}

We consider a parametrized nonlinear heat conduction problem 
\begin{equation}
-\nabla \cdot \left(\kappa(u, \bm \mu) \nabla u \right)  = 0 \quad \mbox{in } \Omega, 
\end{equation} 
 with homogeneous boundary conditions
 \begin{equation}
u = 0 \quad \mbox{on } \Gamma_{\rm D}, \quad  \kappa(u, \bm \mu)  \nabla u \cdot \bm n = 0 \quad \mbox{on } \Gamma_{\rm N}, 
\end{equation} 
and non-homogeneous Neumann condition
 \begin{equation}
 \kappa(u, \bm \mu)  \nabla u \cdot \bm n = \mu_1 \quad \mbox{on } \Gamma_{\rm Q} .
\end{equation} 
Here $\Omega$ is the $T$-shaped domain as shown in Figure \ref{fig4}(a). $\Gamma_{\rm D}$ is the top boundary and $\Gamma_{\rm Q}$ is the bottom boundary, while $\Gamma_{\rm N}$ is the remaining part of the  boundary. The parameter domain is ${\cal D} \equiv [1, 10] \times [0, 10]$. The thermal conductivity is a Gaussian function of the form
\begin{equation}
\kappa(u, \bm \mu)  = \exp( - (u - \mu_2)^2 )   .
\end{equation}
The output of interest is the average of the field variable over the
physical domain. For any given $\bm \mu \in
{\cal D}$, we evaluate $s(\bm \mu) = \int_\Omega u(\bm \mu)$, where  $u(\bm \mu) \in X \subset H_0^1(\Omega) \equiv \{v \in
H^1(\Omega) \mbox{ } | \mbox{ } v|_{\Gamma_{\rm D}} = 0\}$ is the solution of
\begin{equation}
\int_\Omega \kappa(u(\bm \mu), \bm \mu) \nabla u \cdot \nabla v = \mu_1 \int_{\Gamma_{\rm Q}} v, \quad \forall  v
\in X \ .  
\label{eq:8-6}
\end{equation}
The FE approximation space is $X = \{v \in H_0^1(\Omega) : v|_K \in \mathcal{P}^3(T), \  \forall T \in \mathcal{T}_h \}$, where $\mathcal{P}^3(T)$ is a space of polynomials of degree $3$ on an element $T \in \mathcal{T}_h$ and $\mathcal{T}_h$ is a mesh of $900$ quadrilaterals. The dimension of $X$ is $\mathcal{N} = 8401$. Figure \ref{fig4} shows  FE solutions at two different parameter points.

\begin{figure}[hthbp]
	\centering
	\begin{subfigure}[b]{0.29\textwidth}
		\centering
		\includegraphics[width=\textwidth]{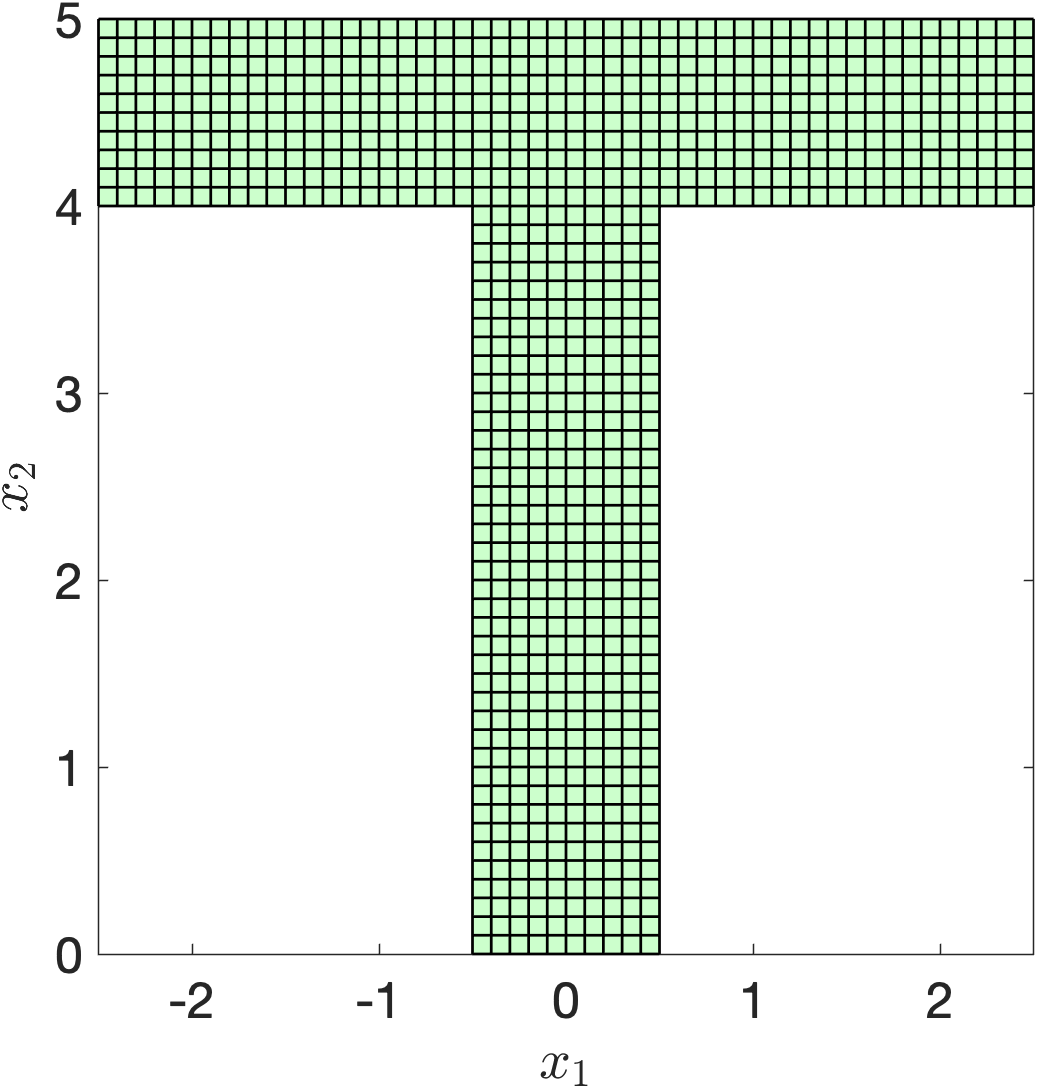}
		\caption{FE mesh}
	\end{subfigure}
	\hfill
	\begin{subfigure}[b]{0.33\textwidth}
		\centering
		\includegraphics[width=\textwidth]{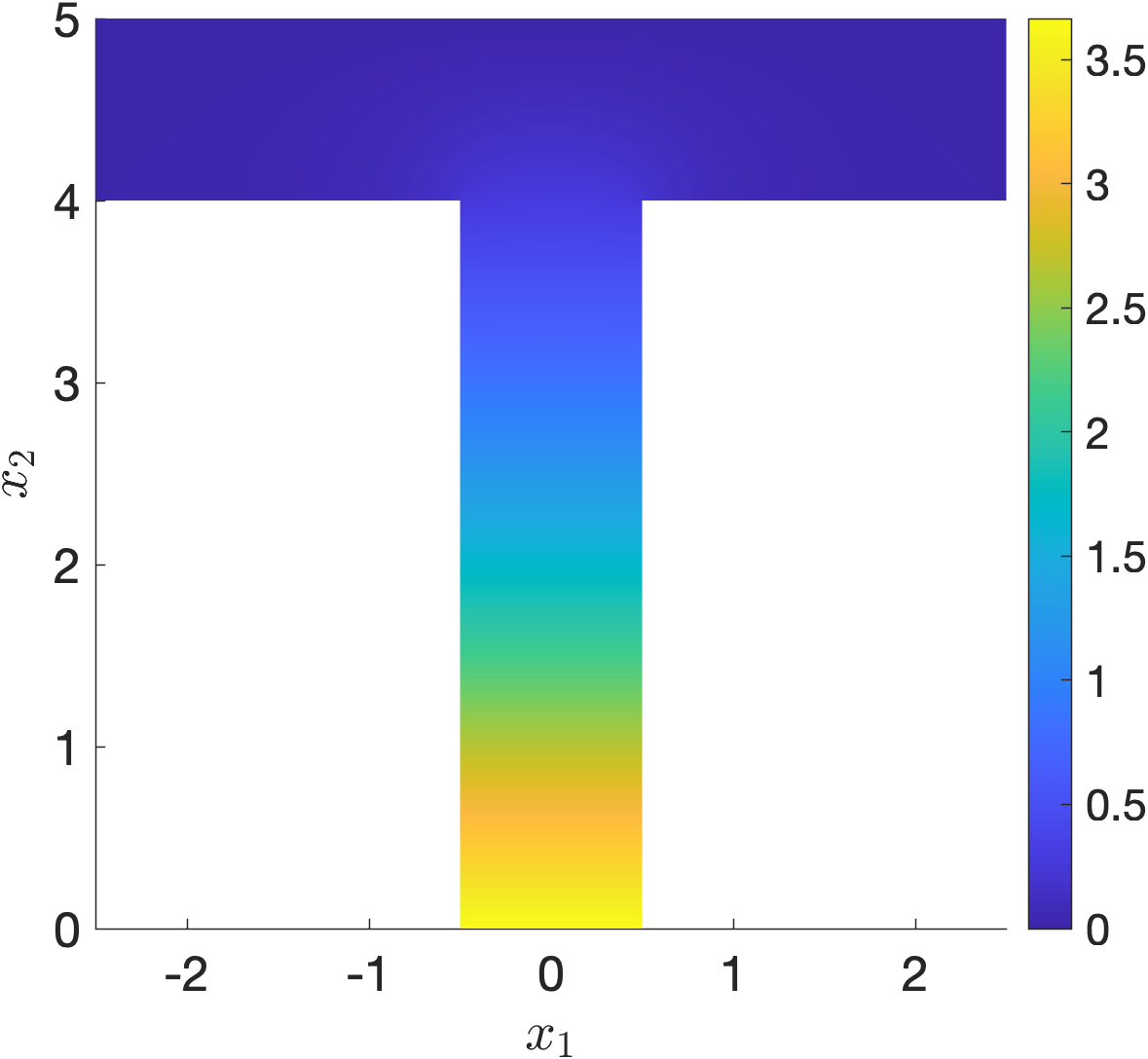}
		\caption{FE solution for $\bm \mu = (1,0)$}
	\end{subfigure}
        \hfill
	\begin{subfigure}[b]{0.323\textwidth}
		\centering
		\includegraphics[width=\textwidth]{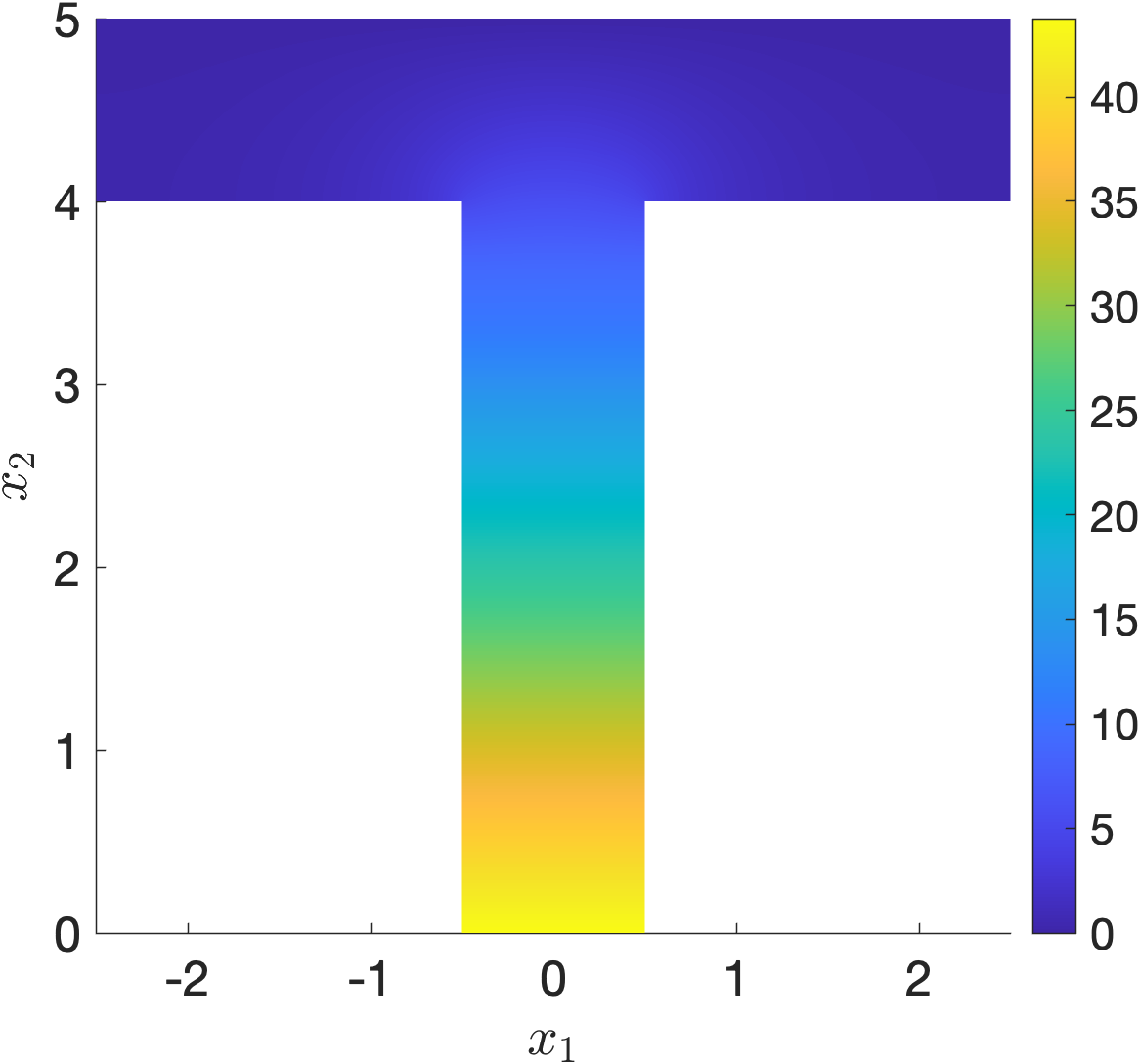}
		\caption{FE solution for $\bm \mu = (10,10)$}
	\end{subfigure}
	\caption{FE mesh and numerical solutions at two parameter points for the nonlinear heat conduction problem.}
	\label{fig4}
\end{figure}
 
\subsection{Reduced basis approximation}

The RB approximation is obtained by a standard Galerkin projection: given $\bm \mu \in {\cal D}$, we evaluate $s_{N}(\bm \mu) = \int_{\Omega} u_{N}(\bm\mu)$, where $u_{N}(\bm\mu) \in W_N^u$ is the solution of
\begin{equation}
\int_\Omega \kappa(u_N(\bm \mu),\bm \mu) \nabla u_N \cdot \nabla v = \mu_1 \int_{\Gamma_{\rm Q}} v, \quad
\forall  v \in W_N^u  . 
\label{eq:8-7}
\end{equation} 
We now express $u_N(\bm \mu) = \sum_{n=1}^N \alpha_{N,n} (\bm \mu) \zeta_n$ and choose
test functions $v = \zeta_j, \ 1 \leq j \leq N$, in~(\refeq{eq:8-7}), we obtain the nonlinear algebraic system
\begin{equation} 
\bm A_{N}(\bm \alpha_N(\bm \mu), \bm \mu) \, \bm \alpha_N(\bm \mu)  = \mu_1 \bm F_N,
\label{eq:8-8}
\end{equation} 
where, for $1 \le n, j \le N$, we have
\begin{equation}
A_{N, jn}(\bm \alpha_N(\bm \mu), \bm \mu) =  \int_\Omega \kappa(u_N(\bm \mu), \bm \mu) \nabla \zeta_n \cdot \nabla  \zeta_j, \quad F_{N,j} = \int_{\Gamma_{\rm Q}} \zeta_j . 
\label{eq:8-9}
\end{equation} 
While $\bm F_N$ can be pre-computed in the offline stage, $\bm A_N$ can not be pre-computed due to the nonlinearity of the function $\kappa$.  Although the nonlinear system (\ref{eq:8-8}) has a small number of unknowns, it is computationally expensive due to the $\mathcal{N}$-dependent complexity of forming $\bm A_N$. As a result, the RB approximation does not offer a significant speedup over the FE approximation.

\subsection{Reduced basis approximation via empirical interpolation}

For this particular nonlinear PDE, we  apply the empirical interpolation to $\kappa(u(\bm \mu), \bm \mu) \nabla u(\bm \mu)$, which is a vector-valued function. To this end, we introduce 
\begin{equation}
g(u(\bm \mu), \bm \mu) =  \kappa(u(\bm \mu), \bm \mu) \frac{\partial u(\bm \mu)}{\partial x_1}, \quad h(u(\bm \mu), \bm \mu) =  \kappa(u(\bm \mu), \bm \mu) \frac{\partial u(\bm \mu)}{\partial x_2} .
\label{eq:9-0}
\end{equation}
The RB approximation (\ref{eq:8-7}) is equivalent to finding $u_{N}(\bm\mu) \in W_N^u$ such that
\begin{equation}
\int_\Omega g(u_N(\bm \mu), \bm \mu) \frac{\partial v}{\partial x_1}  + \int_\Omega h(u_N(\bm \mu), \bm \mu) \frac{\partial v}{\partial x_2}  = \mu_1 \int_{\Gamma_{\rm Q}} v, \quad
\forall  v \in W_N^u  . 
\label{eq:9-7}
\end{equation} 
Since the two nonlinear functions in  (\ref{eq:9-0}) depends not only on the field variable but also its spatial gradient, their evaluation requires us to compute the gradient of the field variable.

To develop an efficient RB approximation, we replace the two nonlinear functions, namely $g(u_N(\bm \mu), \bm \mu)$ and $h(u_N(\bm \mu), \bm \mu)$, in~(\ref{eq:9-7}) with 
\begin{equation}
 g_M(\bm x, \bm \mu) = \sum_{m=1}^M \beta_{M, \, m}(\bm \mu) \psi_m^g( \bm x) , \qquad h_M(\bm x, \bm \mu) = \sum_{m=1}^M \gamma_{M, \, m}(\bm \mu) \psi^h_m( \bm x)     
\end{equation}
to obtain $u_{N,M}(\bm \mu) \in W_N^u$ as the solution of
\begin{equation}
\sum_{m=1}^M \beta_{M, \, m}(\bm \mu)  \int_\Omega \psi_m^g \frac{\partial v}{\partial x_1}  +  \sum_{m=1}^M \gamma_{M, \, m}(\bm \mu) \int_\Omega \psi^h_m  \frac{\partial v}{\partial x_2}  = \mu_1 \int_{\Gamma_{\rm Q}} v, \quad
\forall  v \in W_N^u  .
\label{eq:9-8}
\end{equation} 
Here  $\beta_{M, \, m}(\bm \mu)$ and  $\gamma_{M, \, m}(\bm \mu)$ are computed from 
\begin{equation}
\begin{split}
\sum_{m=1}^M  \psi^g_m(\widehat{\bm x}^g_k)   \beta_{M,m}(\bm \mu) = g(u_{N,M}(\widehat{\bm x}_k^g, \bm \mu), \bm \mu), \quad 1 \le k \le M , \\
\sum_{m=1}^M  \psi^h_m(\widehat{\bm x}^h_k)   \gamma_{M,m}(\bm \mu) = h(u_{N,M}(\widehat{\bm x}_k^h, \bm \mu), \bm \mu), \quad 1 \le k \le M ,
\end{split}
\end{equation}
where $T_M^g = \{\widehat{\bm x}^g_m\}_{m=1}^M$, $W_M^g = \mbox{span}\{\psi_m^g(\bm x) \}_{m=1}^M$, and $T_M^h = \{\widehat{\bm x}^h_m\}_{m=1}^M$, $W_M^h = \mbox{span}\{\psi_m^h(\bm x) \}_{m=1}^M$ are the interpolation point sets and basis sets for approximating $g(u_N(\bm \mu), \bm \mu)$ and $h(u_N(\bm \mu), \bm \mu)$, respectively.  These interpolation point sets and basis sets are pre-computed in the offline stage. For the first-order EIM described in Section \ref{section2.3}, we employ the partial derivatives of the nonlinear functions  as follows 
\begin{multline}    
\label{eqrho160}
\vartheta^g_{(n-1)N + k}(\bm x) =  \frac{\partial \kappa(\zeta_n(\bm x), \bm \mu_n)} {\partial u} \frac{\partial \zeta_n(\bm \mu_n)}{\partial x_1} (\zeta_k(\bm x) - \zeta_n(\bm x)) \ + \\   \kappa(\zeta_n(\bm x), \bm \mu_n) \frac{\partial (\zeta_k(\bm x) - \zeta_n(\bm x))}{\partial x_1}, 
\end{multline}
and
\begin{equation}
\label{eqrho170}
\vartheta^g_{N^2 + (n-1)N + k}(\bm x) = \frac{\partial \kappa(\zeta_n(\bm x), \bm \mu_n)}{\partial \bm \mu} \frac{\partial \zeta_n(\bm \mu_n)}{\partial x_1}  \cdot (\bm \mu_k - \bm \mu_n),     
\end{equation}
for $1 \le k, n \le N$. Note that $\vartheta^h_{(n-1)N + k}(\bm x)$ and $\vartheta^h_{N^2 + (n-1)N + k}(\bm x)$ are similarly computed.



Next, we express $u_{N,M}(\bm \mu) = \sum_{n=1}^N \alpha_{N,M,n} (\bm \mu) \zeta_n$ and choose
test functions $v = \zeta_j, \ 1 \leq j \leq N$, in~(\refeq{eq:9-8}), we obtain the nonlinear algebraic system
\begin{equation}
 \bm E^g_{N,M} \bm g_M (\bm \alpha_{N,M}(\bm \mu), \bm \mu)  +  \bm E_{N,M}^h \bm h_M (\bm \alpha_{N,M}(\bm \mu), \bm \mu)  = \mu_1 \bm F_N, 
\label{eq:9-8b}
\end{equation} 
where $\bm E^g_{N,M} = \bm C^g_{N,M} [\bm B_M^g]^{-1}$ and $\bm E^h_{N,M} = \bm C^h_{N,M} [\bm 
 B_M^h]^{-1}$, for $1 \le j \le N$ and $1 \le m, k \le M$, we have
\begin{equation}
C^g_{N, M, j m} = \int_{\Omega} \psi^g_m \frac{\partial \zeta_j}{\partial x_1}, \quad C^h_{N, M, j m} = \int_{\Omega} \psi^h_m \frac{\partial \zeta_j}{\partial x_2} 
\label{eq:9-10b}
\end{equation} 
and
\begin{equation}
\begin{split}    
g_{M, k} (\bm \alpha_{N,M}(\bm \mu), \bm \mu) = g\left(  \sum_{n=1}^N \alpha_{N,M,n} (\bm \mu) \zeta_n(\widehat{\bm x}_k^g),  \bm \mu\right), \\
h_{M, k} (\bm \alpha_{N,M}(\bm \mu), \bm \mu) = h\left(  \sum_{n=1}^N \alpha_{N,M,n} (\bm \mu) \zeta_n(\widehat{\bm x}_k^h),  \bm \mu\right) .
\end{split}
\label{eq:9-10c}
\end{equation} 
Note that $\bm E^g_{N,M}$, $\bm E^h_{N,M}$, and $\bm F_{N}$ can be pre-computed in the offline stage since they are independent of $\bm \mu$. 

We use Newton method to linearize (\ref{eq:9-8b}) at a given iterate $\bar{\bm \alpha}_{N,M}(\bm \mu)$ to arrive at the following linear system 
\begin{multline}    
\left(\bm E^g_{N,M} \bm H^g_{M,N}(\bar{\bm \alpha}_{N,M}(\bm \mu), \bm \mu)  + \bm E^h_{N,M} \bm H^h_{M,N}(\bar{\bm \alpha}_{N,M}(\bm \mu), \bm \mu)  \right) \delta \bm \alpha_{N,M}(\bm \mu) =  \\\mu_1 \bm F_N  - \bm E^g_{N,M} \bm g_M (\bar{\bm \alpha}_{N,M}(\bm \mu), \bm \mu) - \bm E^h_{N,M} \bm h_M (\bar{\bm \alpha}_{N,M}(\bm \mu), \bm \mu) 
\label{eq:9-11b}
\end{multline}
where, for $1 \le i \le N$ and $1 \le k \le M$, we have
\begin{equation}
\begin{split}
H^g_{M, N, k i}(\bar{\bm \alpha}_{N,M}(\bm \mu), \bm \mu) = g'_u\left(  \sum_{n=1}^N \bar{\alpha}_{N,M,n} (\bm \mu) \zeta_n(\widehat{\bm x}_k),  \bm \mu\right) \zeta_i(\widehat{\bm x}_k) , \\
H^h_{M, N, k i}(\bar{\bm \alpha}_{N,M}(\bm \mu), \bm \mu) = h'_u\left(  \sum_{n=1}^N \bar{\alpha}_{N,M,n} (\bm \mu) \zeta_n(\widehat{\bm x}_k),  \bm \mu\right) \zeta_i(\widehat{\bm x}_k) .    
\end{split}
\label{eq:9-12b}
\end{equation} 
Once the Newton iteration converges, we  evaluate the  RB output as 
\begin{equation}
s_{N,M}(\bm \mu) = \sum_{n=1}^N  \alpha_{N,M, n}(\bm \mu) L_{N,n}       
\end{equation}
where $L_{N,n} = \int_{\Omega} \zeta_n$ are pre-computed in the offline stage. The complexity of solving the linear system (\ref{eq:9-11b}) per Newton iteration is $O(MN^2)$. Therefore, the RB approximation via empirical interpolation is efficient.



\subsection{Numerical results}

We present numerical results for the model problem of Section \ref{section4.1}. \revise{The first-order EIM results reported herein are obtained by using the FOEIM Algoeirhm I.} 
The parameter test sample $S^g_{\rm Test}$ is a uniform grid of size $N_{\rm
  Test} = 30 \times 30$.  We present in Figure \ref{fig6} $\bar{\epsilon}_{N}^s$, $\bar{\epsilon}_{N,M}^s$,  $\bar{\epsilon}_{N}^u$, and $\bar{\epsilon}_{N,M}^u$ as a function of $N$ and in Table~\ref{tab3} 
$\bar{\eta}_{N,M}^s$ and $\bar{\eta}_{N,M}^u$ as a function of $N$. These quantities were defined in Section \ref{section3.4}. As $M$ increases, $\bar{\epsilon}_{N,M}^s$ (respectively, $\bar{\epsilon}_{N,M}^u$) converges to $\bar{\epsilon}_{N}^s$ (respectively, $\bar{\epsilon}_{N}^u$). The RB approximation based on the first-order EIM with $M=3N$ is almost as accurate as the standard RB approximation, whereas the RB approximation based on the original EIM is significantly less accurate than the standard RB approximation.   We observe that the average effectivities decrease toward unity as $M$ increases. Furthermore, the first-order EIM with $M=3N$ yields significantly smaller output effectivties than the original EIM for the same dimension $N$.  Hence, the first-order EIM provides more accurate reduced basis approximation than the original EIM.

\begin{figure}[h]
	\centering
	\begin{subfigure}[b]{0.49\textwidth}
		\centering
		\includegraphics[width=\textwidth]{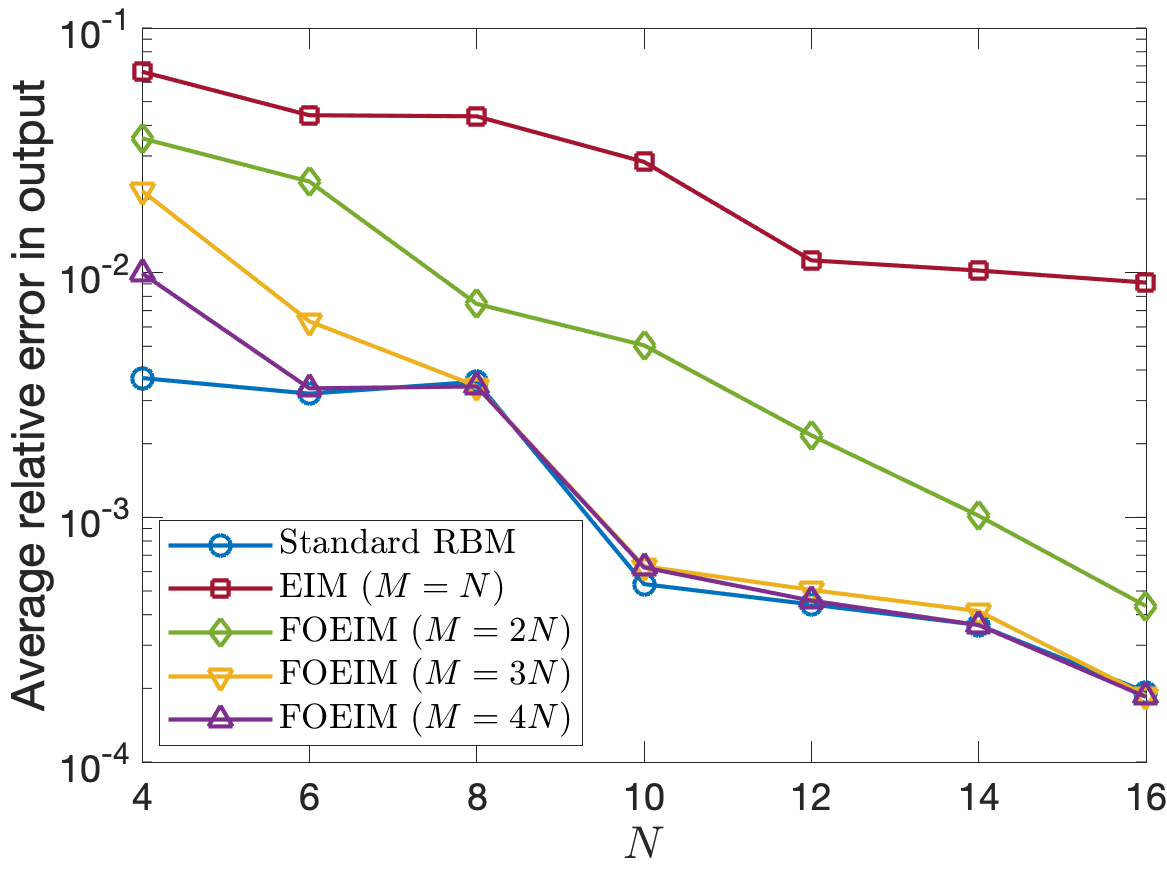}
		\caption{$\bar{\epsilon}_{N}^s$ and $\bar{\epsilon}_{N,M}^s$}
	\end{subfigure}
	\hfill
	\begin{subfigure}[b]{0.49\textwidth}
		\centering		\includegraphics[width=\textwidth]{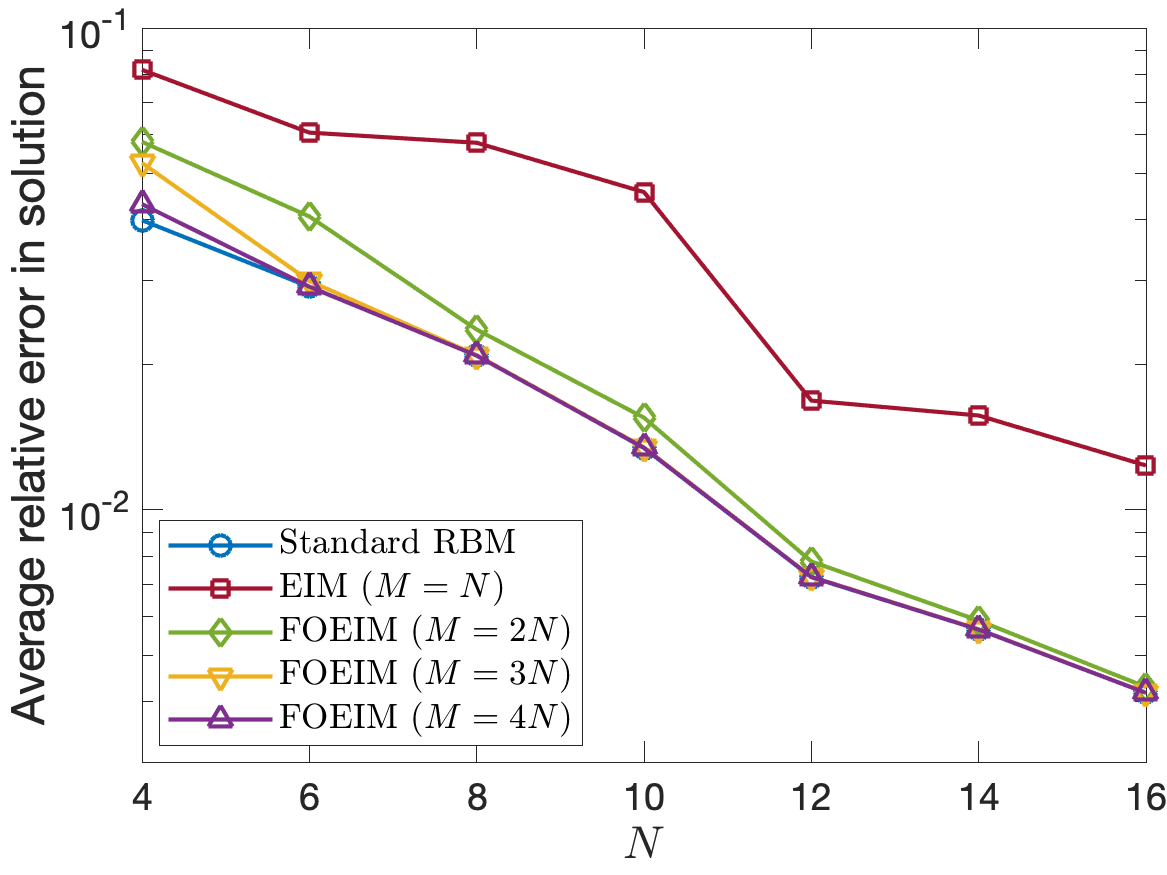}
		\caption{$\bar{\epsilon}_{N}^u$ and $\bar{\epsilon}_{N,M}^u$}
	\end{subfigure}
	\caption{Convergence of the average relative error in output (a) and solution (b) for the model problem of Section \ref{section4.1}.}
	\label{fig6}
\end{figure}

\begin{table}[htbp]
\centering
\small
	\begin{tabular}{|c|cc|cc|cc|cc|cc|}
		\cline{1-9}
    &		 
	 \multicolumn{2}{|c|}{$M=N$} & \multicolumn{2}{c|}{$M=2N$} & 
		 \multicolumn{2}{c|}{$M=3N$} &
		 \multicolumn{2}{c|}{$M=4N$} \\   
   $N$ & $\bar{\eta}_{N,M}^s$ & $\bar{\eta}_{N,M}^u$ & $\bar{\eta}_{N,M}^s$ & $\bar{\eta}_{N,M}^u$ & $\bar{\eta}_{N,M}^s$ & $\bar{\eta}_{N,M}^u$ & $\bar{\eta}_{N,M}^s$ & $\bar{\eta}_{N,M}^u$ \\
		\cline{1-9}
4  &  56.61  &  2.20  &  37.10  &  1.56  &  27.67  &  1.43  &  12.28  &  1.17  \\  
 6  &  22.17  &  2.21  &  16.38  &  1.33  &  2.46  &  1.02  &  1.21  &  1.00  \\  
 8  &  55.05  &  2.94  &  6.72  &  1.12  &  1.31  &  1.00  &  1.12  &  1.00  \\  
 10  &  346.71  &  4.53  &  63.51  &  1.11  &  6.38  &  1.01  &  3.47  &  1.00  \\  
 12  &  94.53  &  3.02  &  14.00  &  1.11  &  1.75  &  1.00  &  1.41  &  1.00  \\  
 14  &  139.88  &  3.55  &  6.67  &  1.06  &  1.57  &  1.00  &  1.06  &  1.00  \\  
 16  &  252.99  &  3.78  &  9.31  &  1.04  &  1.49  &  1.00  &  1.10  &  1.00  \\  
		\hline
	\end{tabular}
	\caption{Average effectivities for the model problem of Section \ref{section4.1}. The column with $M=N$ corresponds to the original EIM, while the columns with $M >N$ correspond  to the first-order EIM.} 
	\label{tab3}
\end{table}

We present in Table~\ref{tab4} the online
computational times to calculate $s_{N}(\bm \mu)$ and $s_{N,M}(\bm \mu)$ as a function of $N$. The values are normalized with respect to the computational time for the direct
calculation of the truth approximation output $s(\bm \mu)$. The computational saving is significant: for an relative accuracy of less than 0.001 ($N = 10$, $M = 30$) in the output, the reduction in online cost is more than a factor of 2600; this is mainly because the matrix assembly of the nonlinear terms for the truth approximation is computationally very expensive. The standard RB approximation is 2 times faster the truth FE approximation, but 1000 times slower than the RB approximation via empirical interpolation. The first-order EIM  yields as accurate approximation as the standard RB method and reduces the online computational times by  several orders of magnitude.

\begin{table}[htbp]
\centering
\small
	\begin{tabular}{|c|c|c|c|c|c|c|}
		\cline{1-7}
  & FEM  & RBM & EIM & FOEIM  & FOEIM & FOEIM \\  
   $N$ & $s(\bm \mu)$ &  $s_N(\bm \mu)$ & $M=N$ & $M=2N$ & $M=3N$ & $M=4N$ \\
		\cline{1-7}
 4 & 1 &  5.15\mbox{e-}1  &  2.08\mbox{e-}4  &  2.23\mbox{e-}4  &  2.09\mbox{e-}4  &  2.00\mbox{e-}4  \\  
 6 & 1 &  5.36\mbox{e-}1  &  2.37\mbox{e-}4  &  2.56\mbox{e-}4  &  2.45\mbox{e-}4  &  2.52\mbox{e-}4  \\  
 8 & 1 &  5.53\mbox{e-}1  &  2.55\mbox{e-}4  &  2.81\mbox{e-}4  &  2.82\mbox{e-}4  &  3.01\mbox{e-}4  \\  
 10 & 1 &  5.55\mbox{e-}1  &  2.76\mbox{e-}4  &  3.18\mbox{e-}4  &  3.25\mbox{e-}4  &  3.82\mbox{e-}4  \\  
 12 & 1  &  5.63\mbox{e-}1  &  2.99\mbox{e-}4  &  3.36\mbox{e-}4  &  3.81\mbox{e-}4  &  4.28\mbox{e-}4  \\  
 14 & 1  &  5.83\mbox{e-}1  &  3.29\mbox{e-}4  &  3.86\mbox{e-}4  &  4.44\mbox{e-}4  &  5.59\mbox{e-}4  \\  
 16 & 1 &  5.89\mbox{e-}1  &  3.46\mbox{e-}4  &  4.09\mbox{e-}4  &  4.75\mbox{e-}4  &  6.14\mbox{e-}4  \\  
		\hline
	\end{tabular}
	\caption{Online computational times (normalized with respect to the time to
  solve for $s(\bm \mu)$) for the model problem of Section \ref{section3.1}.} 
	\label{tab4}
\end{table}

\section{Conclusion}


We have presented an efficient model reduction technique for constructing accurate reduced-basis approximation of nonlinear PDEs via the first-order empirical interpolation. Although we apply our approach to elliptic problems, it can be extended to other PDEs with minor modification. Numerical results were presented to demonstrate that the first-order EIM approach provides computational savings of many orders of magnitude relative to the FE approximation and the standard RB approximation. Furthermore, the first-order EIM approach is considerably more accurate than the original EIM approach for the same dimension of the RB space $N$. Indeed, the proposed approach can be made as accurate as the standard RB approximation by increasing the number of the interpolation points.  

In this paper, we have not considered the selection of parameter sample sets and {\em a posteriori} error estimation.
The accuracy, efficiency, and reliability of a reduced-order model depend crucially on  a parameter sample set to guarantee rapid convergence, and {\em a posteriori} estimator \cite{Huynh06:CRAS_note,Sen2006} to quantify the approximation error. If {\em a posteriori} error estimates are available, they can be used to select the parameter points by using the greedy sampling method \cite{ARCME,Boyaval2009a}. Therefore, {\em a posteriori} error estimation is an important topic to be addressed in future work. \revise{We would like to point out that {\em a posteriori} error estimation procedures have been successfully developed for nonlinear  elliptic PDEs \cite{Nguyen2007} and nonlinear parabolic PDEs \cite{grepl05:_phd_thesis}.  {\em A posteriori} error estimation for nonlinear hyperbolic PDEs is an active yet challenging area of research. We would like to pursue dual-weighted residual error estimation \cite{Yano2020,Blonigan2023,Du2021} for nonlinear hyperbolic PDEs in future work.}

\revise{It is natural to extend the proposed method to higher-order derivative information such as second-order partial derivatives. We believe that the use of higher-order partial derivatives may be necessary for nonlinear hyperbolic PDEs as it may improve the accuracy and stability of ROMs compared to the use of first-order derivatives. We leave this topic for future research.}

\section*{Acknowledgements} \label{}
We would like to thank Professor Anthony T. Patera at MIT, Professor Robert M. Freund at MIT, and Professor Yvon Maday at University of Paris VI for fruitful discussions. We gratefully acknowledge a Seed Grant from the MIT Portugal Program, the United States  Department of Energy under contract DE-NA0003965 and the Air Force Office of Scientific Research under Grant No. FA9550-22-1-0356 for supporting this work.  


 \bibliographystyle{elsarticle-num} 
\bibliography{library.bib}

\begin{thebibliography}{10}
\expandafter\ifx\csname url\endcsname\relax
  \def\url#1{\texttt{#1}}\fi
\expandafter\ifx\csname urlprefix\endcsname\relax\def\urlprefix{URL }\fi
\expandafter\ifx\csname href\endcsname\relax
  \def\href#1#2{#2} \def\path#1{#1}\fi

\bibitem{rowley04:_compressible_pod}
C.~W. Rowley, T.~Colonius, R.~M. Murray, {Model reduction for compressible
  flows using POD and Galerkin projection}, Physica D. Nonlinear Phenomena
  189~(1-2) (2004) 115--129.

\bibitem{LeGresley2000}
P.~A. LeGresley, J.~J. Alonso, {Airfoil design optimization using reduced order
  models based on proper orthogonal decomposition}, in: Fluids 2000 Conference
  and Exhibit, 2000, p. 2545.
\newblock \href {https://doi.org/10.2514/6.2000-2545}
  {\path{doi:10.2514/6.2000-2545}}.

\bibitem{Knezevic2011}
D.~J. Knezevic, N.~C. Nguyen, A.~T. Patera, {Reduced basis approximation and a
  posteriori error estimation for the parametrized unsteady Boussinesq
  equations}, Mathematical Models and Methods in Applied Sciences 21~(7) (2011)
  1415--1442.
\newblock \href {https://doi.org/10.1142/S0218202511005441}
  {\path{doi:10.1142/S0218202511005441}}.

\bibitem{Lorenzi2016}
S.~Lorenzi, A.~Cammi, L.~Luzzi, G.~Rozza, {POD-Galerkin method for finite
  volume approximation of Navier–Stokes and RANS equations}, Computer Methods
  in Applied Mechanics and Engineering 311 (2016) 151--179.
\newblock \href {https://doi.org/10.1016/j.cma.2016.08.006}
  {\path{doi:10.1016/j.cma.2016.08.006}}.

\bibitem{Yano2019}
M.~Yano, {Discontinuous Galerkin reduced basis empirical quadrature procedure
  for model reduction of parametrized nonlinear conservation laws}, Advances in
  Computational Mathematics 45~(5-6) (2019) 2287--2320.
\newblock \href {https://doi.org/10.1007/s10444-019-09710-z}
  {\path{doi:10.1007/s10444-019-09710-z}}.

\bibitem{Yano2020}
M.~Yano, {Goal-oriented model reduction of parametrized nonlinear partial
  differential equations: Application to aerodynamics}, International Journal
  for Numerical Methods in Engineering 121~(23) (2020) 5200--5226.
\newblock \href {https://doi.org/10.1002/nme.6395}
  {\path{doi:10.1002/nme.6395}}.

\bibitem{Blonigan2021}
P.~J. Blonigan, F.~Rizzi, M.~Howard, J.~A. Fike, K.~T. Carlberg, {Model
  reduction for steady hypersonic aerodynamics via conservative manifold
  least-squares Petrov–Galerkin projection}, AIAA Journal 59~(4) (2021)
  1296--1312.
\newblock \href {https://doi.org/10.2514/1.J059785}
  {\path{doi:10.2514/1.J059785}}.

\bibitem{Yu2022}
J.~Yu, J.~S. Hesthaven, {Model order reduction for compressible flows solved
  using the discontinuous Galerkin methods}, Journal of Computational Physics
  468 (2022) 111452.
\newblock \href {https://doi.org/10.1016/j.jcp.2022.111452}
  {\path{doi:10.1016/j.jcp.2022.111452}}.

\bibitem{Ballarin2016}
F.~Ballarin, G.~Rozza, {POD–Galerkin monolithic reduced order models for
  parametrized fluid-structure interaction problems}, International Journal for
  Numerical Methods in Fluids 82~(12) (2016) 1010--1034.
\newblock \href {https://doi.org/10.1002/fld.4252}
  {\path{doi:10.1002/fld.4252}}.

\bibitem{Carlberg2013}
K.~Carlberg, C.~Farhat, J.~Cortial, D.~Amsallem, {The GNAT method for nonlinear
  model reduction: Effective implementation and application to computational
  fluid dynamics and turbulent flows}, Journal of Computational Physics 242
  (2013) 623--647.
\newblock \href {http://arxiv.org/abs/1207.1349} {\path{arXiv:1207.1349}},
  \href {https://doi.org/10.1016/j.jcp.2013.02.028}
  {\path{doi:10.1016/j.jcp.2013.02.028}}.

\bibitem{Du2022}
E.~Du, M.~Yano, {Efficient hyperreduction of high-order discontinuous Galerkin
  methods: Element-wise and point-wise reduced quadrature formulations},
  Journal of Computational Physics 466 (2022) 111399.
\newblock \href {https://doi.org/10.1016/j.jcp.2022.111399}
  {\path{doi:10.1016/j.jcp.2022.111399}}.

\bibitem{ARCME}
G.~Rozza, D.~B.~P. Huynh, A.~T. Patera, {Reduced basis approximation and a
  posteriori error estimation for affinely parametrized elliptic coercive
  partial differential equations: Application to transport and continuum
  mechanics}, Archives Computational Methods in Engineering 15~(4) (2008)
  229--275.

\bibitem{Huynh2007a}
D.~B. Huynh, A.~T. Patera, {Reduced basis approximation and a posteriori error
  estimation for stress intensity factors}, International Journal for Numerical
  Methods in Engineering 72~(10) (2007) 1219--1259.
\newblock \href {https://doi.org/10.1002/nme.2090}
  {\path{doi:10.1002/nme.2090}}.

\bibitem{Farhat2014}
C.~Farhat, P.~Avery, T.~Chapman, J.~Cortial, {Dimensional reduction of
  nonlinear finite element dynamic models with finite rotations and
  energy-based mesh sampling and weighting for computational efficiency},
  International Journal for Numerical Methods in Engineering 98~(9) (2014)
  625--662.
\newblock \href {https://doi.org/10.1002/nme.4668}
  {\path{doi:10.1002/nme.4668}}.

\bibitem{Farhat2015}
C.~Farhat, T.~Chapman, P.~Avery, {Structure-preserving, stability, and accuracy
  properties of the energy-conserving sampling and weighting method for the
  hyper reduction of nonlinear finite element dynamic models}, International
  Journal for Numerical Methods in Engineering 102~(5) (2015) 1077--1110.
\newblock \href {https://doi.org/10.1002/nme.4820}
  {\path{doi:10.1002/nme.4820}}.

\bibitem{Tiso2013}
P.~Tiso, D.~J. Rixen, {Discrete empirical interpolation method for finite
  element structural dynamics}, in: Conference Proceedings of the Society for
  Experimental Mechanics Series, Vol.~1, 2013, pp. 203--212.

\bibitem{Chen2010a}
Y.~Chen, J.~S. Hesthaven, Y.~Maday, J.~Rodr{\'{i}}guez,
  \href{http://epubs.siam.org/action/showAbstract?page=970{\&}volume=32{\&}issue=2{\&}journalCode=sjoce3}{{Certified
  Reduced Basis Methods and Output Bounds for the Harmonic Maxwell's
  Equations}}, SIAM Journal on Scientific Computing 32~(2) (2010) 970--996.
\newblock \href {https://doi.org/10.1137/09075250X}
  {\path{doi:10.1137/09075250X}}.

\bibitem{Vidal-Codina2018a}
F.~Vidal-Codina, N.~C. Nguyen, J.~Peraire, {Computing parametrized solutions
  for plasmonic nanogap structures}, Journal of Computational Physics 366
  (2018) 89--106.
\newblock \href {http://arxiv.org/abs/1710.06539} {\path{arXiv:1710.06539}},
  \href {https://doi.org/10.1016/j.jcp.2018.04.009}
  {\path{doi:10.1016/j.jcp.2018.04.009}}.

\bibitem{Pomplun2010}
J.~Pomplun, F.~Schmidt, {Accelerated a posteriori error estimation for the
  reduced basis method with application to 3d electromagnetic scattering
  problems}, SIAM Journal on Scientific Computing 32~(2) (2010) 498--520.
\newblock \href {https://doi.org/10.1137/090760271}
  {\path{doi:10.1137/090760271}}.

\bibitem{Manzoni2012}
A.~Manzoni, A.~Quarteroni, G.~Rozza, {Shape optimization for viscous flows by
  reduced basis methods and free-form deformation}, International Journal for
  Numerical Methods in Fluids 70~(5) (2012) 646--670.
\newblock \href {https://doi.org/10.1002/fld.2712}
  {\path{doi:10.1002/fld.2712}}.

\bibitem{Qian2017}
E.~Qian, M.~Grepl, K.~Veroy, K.~Willcox, {A Certified Trust Region Reduced
  Basis Approach to PDE-Constrained Optimization}, SIAM Journal on Scientific
  Computing 39~(5) (2017) S434--S460.
\newblock \href {https://doi.org/10.1137/16m1081981}
  {\path{doi:10.1137/16m1081981}}.

\bibitem{Hoang2013}
K.~Hoang, B.~Khoo, G.~Liu, N.~Nguyen, A.~Patera,
  \href{http://dx.doi.org/10.1080/17415977.2012.757315}{{Rapid identification
  of material properties of the interface tissue in dental implant systems
  using reduced basis method}}, Inverse Problems in Science and Engineering
  21~(8) (2013) 1310--1334.
\newblock \href {https://doi.org/10.1080/17415977.2012.757315}
  {\path{doi:10.1080/17415977.2012.757315}}.

\bibitem{Nguyen_SantaFE08}
N.~C. Nguyen, G.~Rozza, D.~B. Huynh, A.~T. Patera, {Reduced basis approximation
  and a posteriori error estimation for parametrized parabolic pdes:
  Application to real-time bayesian parameter estimation}, in: Biegler, Biros,
  Ghattas, Heinkenschloss, Keyes, Mallick, Tenorio, {van Bloemen Waanders},
  Willcox (Eds.), Large-Scale Inverse Problems and Quantification of
  Uncertainty, John Wiley and Sons, UK, 2010, pp. 151--177.
\newblock \href {https://doi.org/10.1002/9780470685853.ch8}
  {\path{doi:10.1002/9780470685853.ch8}}.

\bibitem{Galbally2010}
D.~Galbally, K.~Fidkowski, K.~Willcox, O.~Ghattas, {Non-linear model reduction
  for uncertainty quantification in large-scale inverse problems},
  International Journal for Numerical Methods in Engineering 81~(12) (2010)
  1581--1608.
\newblock \href {https://doi.org/10.1002/nme.2746}
  {\path{doi:10.1002/nme.2746}}.

\bibitem{Lieberman2010}
C.~Lieberman, K.~Willcox, O.~Ghattas, {Parameter and state model reduction for
  large-scale statistical inverse problems}, SIAM Journal on Scientific
  Computing 32~(5) (2010) 2523--2542.
\newblock \href {https://doi.org/10.1137/090775622}
  {\path{doi:10.1137/090775622}}.

\bibitem{Nguyen2008}
N.~C. Nguyen,
  \href{http://www.scopus.com/inward/record.url?eid=2-s2.0-53349103005{\&}partnerID=40{\&}md5=64852a2fac4b34c956379f980e2cf264}{{A
  multiscale reduced-basis method for parametrized elliptic partial
  differential equations with multiple scales}}, Journal of Computational
  Physics 227~(23) (2008) 9807--9822.

\bibitem{Bader2016}
E.~Bader, M.~K{\"{a}}rcher, M.~A. Grepl, K.~Veroy, {Certified reduced basis
  methods for parametrized distributed elliptic optimal control problems with
  control constraints}, SIAM Journal on Scientific Computing 38~(6) (2016)
  A3921--A3946.
\newblock \href {https://doi.org/10.1137/16M1059898}
  {\path{doi:10.1137/16M1059898}}.

\bibitem{Karcher2018}
M.~K{\"{a}}rcher, Z.~Tokoutsi, M.~A. Grepl, K.~Veroy, {Certified Reduced Basis
  Methods for Parametrized Elliptic Optimal Control Problems with Distributed
  Controls}, Journal of Scientific Computing 75~(1) (2018) 276--307.
\newblock \href {https://doi.org/10.1007/s10915-017-0539-z}
  {\path{doi:10.1007/s10915-017-0539-z}}.

\bibitem{Karcher2018a}
M.~K{\"{a}}rcher, S.~Boyaval, M.~A. Grepl, K.~Veroy, {Reduced basis
  approximation and a posteriori error bounds for 4D-Var data assimilation},
  Optimization and Engineering 19~(3) (2018) 663--695.
\newblock \href {http://arxiv.org/abs/1802.02328} {\path{arXiv:1802.02328}},
  \href {https://doi.org/10.1007/s11081-018-9389-2}
  {\path{doi:10.1007/s11081-018-9389-2}}.

\bibitem{Maday2013}
Y.~Maday, O.~Mula, {A generalized empirical interpolation method: Application
  of reduced basis techniques to data assimilation}, in: Springer INdAM Series,
  Vol.~4, Springer, 2013, pp. 221--235.
\newblock \href {http://arxiv.org/abs/1512.00683} {\path{arXiv:1512.00683}},
  \href {https://doi.org/10.1007/978-88-470-2592-9-13}
  {\path{doi:10.1007/978-88-470-2592-9-13}}.

\bibitem{Maday2015b}
Y.~Maday, A.~T. Patera, J.~D. Penn, M.~Yano, {A parameterized-background
  data-weak approach to variational data assimilation: Formulation, analysis,
  and application to acoustics}, International Journal for Numerical Methods in
  Engineering 102~(5) (2015) 933--965.
\newblock \href {https://doi.org/10.1002/nme.4747}
  {\path{doi:10.1002/nme.4747}}.

\bibitem{prudhomme02:_reliab_real_time_solut_param}
C.~Prud'homme, D.~Rovas, K.~Veroy, Y.~Maday, A.~T. Patera, G.~Turinici,
  {Reliable real-time solution of parametrized partial differential equations:
  Reduced-basis output bounds methods}, Journal of Fluids Engineering 124~(1)
  (2002) 70--80.

\bibitem{Nguyen2009b}
N.~C. Nguyen, G.~Rozza, A.~T. Patera, {Reduced basis approximation and a
  posteriori error estimation for the time-dependent viscous Burgers'
  equation}, Calcolo 46~(3) (2009) 157--185.
\newblock \href {https://doi.org/10.1007/s10092-009-0005-x}
  {\path{doi:10.1007/s10092-009-0005-x}}.

\bibitem{Grepl2007a}
M.~A. Grepl, Y.~Maday, N.~C. Nguyen, A.~T. Patera,
  \href{http://www.esaim-m2an.org/10.1051/m2an:2007031}{{Efficient
  reduced-basis treatment of nonaffine and nonlinear partial differential
  equations}}, Mathematical Modelling and Numerical Analysis 41~(3) (2007)
  575--605.
\newblock \href {https://doi.org/10.1051/m2an:2007031}
  {\path{doi:10.1051/m2an:2007031}}.

\bibitem{Nguyen2008d}
N.~C. Nguyen, J.~Peraire, \href{http://doi.wiley.com/10.1002/nme.2309}{{An
  efficient reduced-order modeling approach for non-linear parametrized partial
  differential equations}}, International Journal for Numerical Methods in
  Engineering 76~(1) (2008) 27--55.
\newblock \href {https://doi.org/10.1002/nme.2309}
  {\path{doi:10.1002/nme.2309}}.

\bibitem{weile01:_reduc_krylov}
D.~S. Weile, E.~Michielssen, K.~Gallivan, {Reduced-order modeling of
  multiscreen frequency-selective surfaces using Krylov-based rational
  interpolation}, IEEE Transactions on Antennas and Propagation 49~(5) (2001)
  801--813.
\newblock \href {https://doi.org/10.1109/8.929635}
  {\path{doi:10.1109/8.929635}}.

\bibitem{phillips03:_weakly_nonlinearMOR}
J.~R. Phillips, {Projection-based approaches for model reduction of weakly
  nonlinear, time-varying systems}, IEEE Transactions on Computer-Aided Design
  of Integrated Circuits and Systems 22~(2) (2003) 171--187.
\newblock \href {https://doi.org/10.1109/TCAD.2002.806605}
  {\path{doi:10.1109/TCAD.2002.806605}}.

\bibitem{rewienski03:_trajectory_approach}
M.~Rewie{\'{n}}ski, J.~White, {A trajectory piecewise-linear approach to model
  order reduction and fast simulation of nonlinear circuits and micromachined
  devices}, IEEE Transactions on Computer-Aided Design of Integrated Circuits
  and Systems 22~(2) (2003) 155--170.
\newblock \href {https://doi.org/10.1109/TCAD.2002.806601}
  {\path{doi:10.1109/TCAD.2002.806601}}.

\bibitem{Barrault2004a}
M.~Barrault, Y.~Maday, N.~C. Nguyen, A.~T. Patera,
  \href{http://linkinghub.elsevier.com/retrieve/pii/S1631073X04004248}{{An
  ‘empirical interpolation' method: application to efficient reduced-basis
  discretization of partial differential equations}}, Comptes Rendus
  Mathematique 339~(9) (2004) 667--672.
\newblock \href {https://doi.org/10.1016/j.crma.2004.08.006}
  {\path{doi:10.1016/j.crma.2004.08.006}}.

\bibitem{Nguyen2007}
N.~C. Nguyen,
  \href{http://linkinghub.elsevier.com/retrieve/pii/S0021999107003749}{{A
  posteriori error estimation and basis adaptivity for reduced-basis
  approximation of nonaffine-parametrized linear elliptic partial differential
  equations}}, Journal of Computational Physics 227~(2) (2007) 983--1006.
\newblock \href {https://doi.org/10.1016/j.jcp.2007.08.031}
  {\path{doi:10.1016/j.jcp.2007.08.031}}.

\bibitem{Drohmann2012}
M.~Drohmann, B.~Haasdonk, M.~Ohlberger, {Reduced basis approximation for
  nonlinear parametrized evolution equations based on empirical operator
  interpolation}, SIAM Journal on Scientific Computing 34~(2) (2012)
  A937--A969.
\newblock \href {https://doi.org/10.1137/10081157X}
  {\path{doi:10.1137/10081157X}}.

\bibitem{Hesthaven2014}
J.~S. Hesthaven, B.~Stamm, S.~Zhang, {Efficient greedy algorithms for
  high-dimensional parameter spaces with applications to empirical
  interpolation and reduced basis methods}, ESAIM: Mathematical Modelling and
  Numerical Analysis 48~(1) (2014) 259--283.
\newblock \href {https://doi.org/10.1051/m2an/2013100}
  {\path{doi:10.1051/m2an/2013100}}.

\bibitem{Hesthaven2022}
J.~S. Hesthaven, C.~Pagliantini, G.~Rozza, {Reduced basis methods for
  time-dependent problems}, Acta Numerica 31 (2022) 265--345.
\newblock \href {https://doi.org/10.1017/S0962492922000058}
  {\path{doi:10.1017/S0962492922000058}}.

\bibitem{Chen2021}
Y.~Chen, S.~Gottlieb, L.~Ji, Y.~Maday, {An EIM-degradation free reduced basis
  method via over collocation and residual hyper reduction-based error
  estimation}, Journal of Computational Physics 444 (2021) 110545.
\newblock \href {https://doi.org/10.1016/j.jcp.2021.110545}
  {\path{doi:10.1016/j.jcp.2021.110545}}.

\bibitem{Nguyen2008a}
N.~C. Nguyen, A.~T. Patera, J.~Peraire,
  \href{http://doi.wiley.com/10.1002/nme.2086}{{A 'best points' interpolation
  method for efficient approximation of parametrized functions}}, International
  Journal for Numerical Methods in Engineering 73~(4) (2008) 521--543.
\newblock \href {https://doi.org/10.1002/nme.2086}
  {\path{doi:10.1002/nme.2086}}.

\bibitem{Kramer2019}
B.~Kramer, K.~E. Willcox, {Nonlinear model order reduction via lifting
  transformations and proper orthogonal decomposition}, AIAA Journal 57~(6)
  (2019) 2297--2307.
\newblock \href {http://arxiv.org/abs/1808.02086} {\path{arXiv:1808.02086}},
  \href {https://doi.org/10.2514/1.J057791} {\path{doi:10.2514/1.J057791}}.

\bibitem{Yano2019a}
M.~Yano, A.~T. Patera, {An LP empirical quadrature procedure for reduced basis
  treatment of parametrized nonlinear PDEs}, Computer Methods in Applied
  Mechanics and Engineering 344 (2019) 1104--1123.
\newblock \href {https://doi.org/10.1016/j.cma.2018.02.028}
  {\path{doi:10.1016/j.cma.2018.02.028}}.

\bibitem{Chaturantabut2010}
S.~Chaturantabut, D.~C. Sorensen, {Nonlinear model reduction via discrete
  empirical interpolation}, SIAM Journal on Scientific Computing 32~(5) (2010)
  2737--2764.
\newblock \href {https://doi.org/10.1137/090766498}
  {\path{doi:10.1137/090766498}}.

\bibitem{Maierhofer2022}
J.~Maierhofer, D.~J. Rixen, {Model order reduction using hyperreduction methods
  (DEIM, ECSW) for magnetodynamic FEM problems}, Finite Elements in Analysis
  and Design 209 (2022) 103793.
\newblock \href {https://doi.org/10.1016/j.finel.2022.103793}
  {\path{doi:10.1016/j.finel.2022.103793}}.

\bibitem{Astrid2008}
P.~Astrid, S.~Weiland, K.~Willcox, T.~Backx, {Missing point estimation in
  models described by proper orthogonal decomposition}, IEEE Transactions on
  Automatic Control 53~(10) (2008) 2237--2251.
\newblock \href {https://doi.org/10.1109/TAC.2008.2006102}
  {\path{doi:10.1109/TAC.2008.2006102}}.

\bibitem{Peherstorfer2020}
B.~Peherstorfer, Z.~Drmac, S.~Gugercin, {Stability of discrete empirical
  interpolation and gappy proper orthogonal decomposition with randomized and
  deterministic sampling points}, SIAM Journal on Scientific Computing 42~(5)
  (2020) A2837--A2864.
\newblock \href {http://arxiv.org/abs/1808.10473} {\path{arXiv:1808.10473}},
  \href {https://doi.org/10.1137/19M1307391} {\path{doi:10.1137/19M1307391}}.

\bibitem{Eftang2012b}
J.~L. Eftang, B.~Stamm, {Parameter multi-domain 'hp' empirical interpolation},
  International Journal for Numerical Methods in Engineering 90~(4) (2012)
  412--428.
\newblock \href {https://doi.org/10.1002/nme.3327}
  {\path{doi:10.1002/nme.3327}}.

\bibitem{Peherstorfer2014}
B.~Peherstorfer, D.~Butnaru, K.~Willcox, H.~J. Bungartz, {Localized discrete
  empirical interpolation method}, SIAM Journal on Scientific Computing 36~(1)
  (2014).
\newblock \href {https://doi.org/10.1137/130924408}
  {\path{doi:10.1137/130924408}}.

\bibitem{everson95karhunenloeve}
R.~Everson, L.~Sirovich, {Karhunen-Loeve procedure for gappy data}, Opt. Soc.
  Am. A 12~(8) (1995) 1657--1664.

\bibitem{willcox06:_gappy}
K.~Willcox, {Unsteady Flow Sensing and Estimation via the Gappy Proper
  Orthogonal Decomposition}, Computers and Fluids 35 (2006) 208--226.

\bibitem{Zimmermann2016}
R.~Zimmermann, K.~Willcox, {An accelerated greedy missing point estimation
  procedure}, SIAM Journal on Scientific Computing 38~(5) (2016) A2827--A2850.
\newblock \href {https://doi.org/10.1137/15M1042899}
  {\path{doi:10.1137/15M1042899}}.

\bibitem{Argaud2017}
J.~P. Argaud, B.~Bouriquet, H.~Gong, Y.~Maday, O.~Mula, {Stabilization of
  (G)EIM in Presence of Measurement Noise: Application to Nuclear Reactor
  Physics}, in: Lecture Notes in Computational Science and Engineering, Vol.
  119, 2017, pp. 133--145.
\newblock \href {http://arxiv.org/abs/1611.02219} {\path{arXiv:1611.02219}}.

\bibitem{Guillaume97}
P.~Guillaume, M.~Masmoudi, {Solution to the time-harmonic Maxwell's equations
  in a waveguide: use of higher-order derivatives for solving the discrete
  problem}, SIAM J. Numer. Anal. 34~(4) (1997) 1306--1330.

\bibitem{ito98:_reduc_basis_method_contr_probl_gover_pdes}
K.~Ito, S.~S. Ravindran, {A Reduced Basis Method for Control Problems Governed
  by {\{}PDEs{\}}}, in: W.~Desch, F.~Kappel, K.~Kunisch (Eds.), Control and
  Estimation of Distributed Parameter Systems, Birkh{\"{a}}user, 1998, pp.
  153--168.

\bibitem{Maday2008}
Y.~Maday, N.~C. Nguyen, A.~T. Patera, G.~S. Pau,
  \href{http://www.aimsciences.org/journals/displayArticles.jsp?paperID=3753}{{A
  general multipurpose interpolation procedure: The magic points}},
  Communications on Pure and Applied Analysis 8~(1) (2009) 383--404.
\newblock \href {https://doi.org/10.3934/cpaa.2009.8.383}
  {\path{doi:10.3934/cpaa.2009.8.383}}.

\bibitem{Vila-Perez2022}
J.~Vila-P{\'{e}}rez, R.~L. {Van Heyningen}, N.-C. Nguyen, J.~Peraire,
  \href{https://www.sciencedirect.com/science/article/pii/S2352711022001303}{{Exasim:
  Generating discontinuous Galerkin codes for numerical solutions of partial
  differential equations on graphics processors}}, SoftwareX 20 (2022) 101212.
\newblock \href {https://doi.org/https://doi.org/10.1016/j.softx.2022.101212}
  {\path{doi:https://doi.org/10.1016/j.softx.2022.101212}}.

\bibitem{Huynh06:CRAS_note}
D.~B.~P. Huynh, G.~Rozza, S.~Sen, A.~T. Patera, {A successive constraint linear
  optimization method for lower bounds of parametric coercivity and inf-sup
  stability constants}, C. R. Acad. Sci. Paris, Analyse Num{\'{e}}rique 345~(8)
  (2007) 473--478.

\bibitem{Sen2006}
S.~Sen, K.~Veroy, D.~B.~P. Huynh, S.~Deparis, N.~C. Nguyen, A.~T. Patera,
  \href{http://linkinghub.elsevier.com/retrieve/pii/S0021999106000830}{{Natural
  norm'' a posteriori error estimators for reduced basis approximations}},
  Journal of Computational Physics 217~(1) (2006) 37--62.
\newblock \href {https://doi.org/10.1016/j.jcp.2006.02.012}
  {\path{doi:10.1016/j.jcp.2006.02.012}}.

\bibitem{Boyaval2009a}
S.~Boyaval, C.~L. Bris, Y.~Maday, N.~C. Nguyen, A.~T. Patera,
  \href{http://linkinghub.elsevier.com/retrieve/pii/S0045782509002114}{{A
  reduced basis approach for variational problems with stochastic parameters:
  Application to heat conduction with variable Robin coefficient}}, Computer
  Methods in Applied Mechanics and Engineering 198~(41-44) (2009) 3187--3206.
\newblock \href {https://doi.org/10.1016/j.cma.2009.05.019}
  {\path{doi:10.1016/j.cma.2009.05.019}}.

\bibitem{grepl05:_phd_thesis}
M.~Grepl, {Reduced-Basis Approximations and {\{}$\backslash$em A
  Posteriori$\backslash$/{\}} Error Estimation for Parabolic Partial
  Differential Equations}, Ph.D. thesis, Massachusetts Institute of Technology,
  Cambridge, MA (may 2005).

\bibitem{Blonigan2023}
P.~J. Blonigan, E.~J. Parish, {Evaluation of dual-weighted residual and machine
  learning error estimation for projection-based reduced-order models of steady
  partial differential equations}, Computer Methods in Applied Mechanics and
  Engineering 409 (2023) 115988.
\newblock \href {https://doi.org/10.1016/j.cma.2023.115988}
  {\path{doi:10.1016/j.cma.2023.115988}}.

\bibitem{Du2021}
E.~Du, M.~Sleeman, {Adaptive Discontinuous-Galerkin Reduced-Basis
  Reduced-Quadrature Method for Many-Query CFD Problems}, in: AIAA Aviation and
  Aeronautics Forum and Exposition, AIAA AVIATION Forum 2021, 2021, pp.
  AIAA--2021--2716.
\newblock \href {https://doi.org/10.2514/6.2021-2716}
  {\path{doi:10.2514/6.2021-2716}}.

\end{thebibliography}




\end{document}